\documentclass[12pt]{amsart}
\usepackage{amsmath, amssymb,epic,graphicx,mathrsfs,enumerate}
\usepackage[all]{xy}
\usepackage{color}
\usepackage{comment}

\usepackage{amsthm}
\usepackage{amssymb}
\usepackage{latexsym}
\usepackage{longtable}
\usepackage{epsfig}
\usepackage{amsmath}
\usepackage{hhline}


\DeclareMathOperator{\inn}{Inn} \DeclareMathOperator{\perm}{Sym}
 \DeclareMathOperator{\soc}{soc}
\DeclareMathOperator{\aut}{Aut}

 \DeclareMathOperator{\frat}{Frat}

\DeclareMathOperator{\maxd}{MaxDim}
\DeclareMathOperator{\sym}{Sym}

\DeclareMathOperator{\gl}{GL} 
 
\DeclareMathOperator{\GL}{GL} \DeclareMathOperator{\AGL}{AGL}

\DeclareMathOperator{\core}{Core}

\DeclareMathOperator{\alt}{Alt}

\DeclareMathOperator{\fit}{Fit}

\newcommand{\FF}{\mathbb F}

\renewcommand{\emptyset}{\varnothing}

\DeclareMathOperator{\menta}{MinInt}
\DeclareMathOperator{\manta}{MaxInt}
\DeclareMathOperator{\mindim}{MinDim}
\DeclareMathOperator{\maxdim}{MaxDim}

\newtheorem{thm}{Theorem}
\newtheorem{cor}[thm]{Corollary}
 \newtheorem{lemma}[thm]{Lemma}
\newtheorem{prop}[thm]{Proposition} 
 \newtheorem{defn}[thm]{Definition}
\newtheorem{question}[thm]{Question}

\numberwithin{equation}{section}

\renewcommand{\footnote}{\endnote}
\newcommand{\ignore}[1]{}\makeglossary

\begin{document}
	\bibliographystyle{amsplain}
	\subjclass{ 20D30, 20D10, 20E28}
	\keywords{finite groups, subgroup lattice, maximal subgroups}
	\title[Maximal intersections]{Maximal intersections in  finite groups}

	\author{Andrea Lucchini}
	\address{Universit\`a degli Studi di Padova\\  Dipartimento di Matematica \lq\lq Tullio Levi-Civita\rq\rq\\ Via Trieste 63, 35121 Padova, Italy\\email: lucchini@math.unipd.it}
	\thanks{We would like to thank Silvio Dolfi, Daniele Garzoni and Attila Mar\'{o}ti for fruitful discussions and valuable and helpful comments.}

	\begin{abstract} For a finite group $G$, we investigate the behaviour of four invariants, $\text{MaxDim}(G),$ $\text{MinDim}(G),$ $\text{MaxInt}(G)$ and $\text{MinInt}(G),$ measuring in some way the width and the height of the lattice  $\mathcal M(G)$ consisting of the intersections of the maximal subgroups of $G.$	\end{abstract}
	\maketitle

We will say that a subgroup $H$ of a finite group $G$ is a maximal intersection in $G$ if there exists a family $M_1,\dots,M_t$ of maximal subgroups of $G$ with $H=M_1\cap \dots \cap M_t.$ We will denote by $\mathcal M(G)$  the subposet of the subgroup lattice of $G$ consisting of $G$ and all the maximal intersections in $G$.

Let $\mathcal X$ be a set of maximal subgroups of the finite group $G.$ We say that $\mathcal X$ is irredundant if the intersection of the subgroups in $\mathcal X$ is not equal to the intersection of any proper subset of $\mathcal X.$ The maximal dimension $\maxdim(G)$ of $G$ is defined as the maximal size of an irredundant set of maximal subgroups of $G.$ This definition arises from the study of the maximum size $m(G)$ of an irredundant generating set for $G$ (that is, a generating set that does not properly contain any other generating set). Indeed, it is easy to see that $m(G) \leq  \maxdim(G)$. However in \cite{delu} it was proved that the difference $\maxdim(G)-m(G)$ can be arbitrarily large. Independently on this motivation, $\maxdim(G)$ can be view as a measure of the width of the poset $\mathcal M(G).$ The study of the maximal dimension of an arbitrary finite group is quite complicated and it is difficult to find good strategies to investigate this invariant. For example one could ask the following question.
\begin{question}\label{quone} Let $N$ be a normal subgroup of a finite group $G.$ Suppose that $\delta=\maxdim{G/N},$ $d=\maxdim{G}$ and  $\{M_1/N,\dots,$ $M_\delta/N\}$ is an irredundant set of maximal subgroups of $G/N$. Do there exist $d-\delta$ maximal subgroups $M_{\delta+1},\dots,M_d$ of $G$ such that $M_1,\dots,M_\delta,$ $M_{\delta+1},\dots,M_d$ is an irredundant set of maximal subgroups of $G$?
\end{question} We will prove that the answer is negative and this makes difficult to estimate $\maxdim(G)$ arguing by induction. Another natural question to which we will give an unexpected negative answer is the following. 
\begin{question}\label{quodue}Does $G$ contain an irredundant family of maximal subgroups of size $\maxdim(G)$ whose intersection is the Frattini subgroup?
\end{question}

The dual concept of minimal dimension was introduced in \cite{GL}. We say that an irredundant set of maximal subgroups is maximal irredundant if it is not properly contained in any other irredundant set of maximal subgroups. Then the minimal dimension of $G,$ denoted $\mindim(G)$, is the minimal size of a maximal irredundant set.  
In \cite{bgl} it was proposed to study the finite groups $G$ with $\mindim(G)=\maxdim(G)$ (minmax groups). All nilpotent groups are minmax, but there are non-nilpotent minmax groups, such as Sym(3), Alt(4) and Sym(4).  By a well-known theorem of Iwasawa \cite{iwa}, all unrefinable chains in the subgroup lattice of a finite group $G$ have the same length if and only if $G$ is supersoluble. In our case, in place of arbitrary unrefinable chains, we restrict our attention to the unrefinable chains in the poset $\mathcal M(G).$  In the context of Iwasawa's result, it is worth noting that supersolubility does not imply the minmax property. However it seems reasonable to conjecture that every minmax group is soluble. The results in \cite{bgl} give evidence to this conjecture, proving that every non-abelian finite simple group is not minmax.

  One could even expect to have a relation between $\mindim(G),$ $\maxdim(G)$  and the minimal and maximal length of an unrefinable chain in $\mathcal M(G)$. We call these two new invariants $\menta(G)$ and $\manta(G)$. However the behavior of these invariants is more intricate than one can expected. It is not difficult to prove that the following holds.
  \begin{thm}\label{facile}
If  $G$ is a finite group, then 
\begin{enumerate}
\item $\mindim(G)\leq \menta(G);$ 
\item $\maxdim(G)\leq \manta(G).$
\end{enumerate}
  \end{thm}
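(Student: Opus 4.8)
The plan is to pass back and forth between irredundant families of maximal subgroups and chains in $\mathcal M(G)$, using two elementary facts about the poset $\mathcal M(G)$. First, it is bounded: its greatest element is $G$ and its least element is $\Phi(G)=\bigcap\{M\mid M\text{ maximal in }G\}$, since $\Phi(G)$ is itself a maximal intersection and is contained in every member of $\mathcal M(G)$. Hence an unrefinable chain in $\mathcal M(G)$ is a chain $G=H_0>H_1>\dots>H_\ell=\Phi(G)$ with no member of $\mathcal M(G)$ strictly between two consecutive terms, and $\menta(G)$ (resp.\ $\manta(G)$) is the least (resp.\ greatest) length $\ell$ of such a chain. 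Second, $\Phi(G)\subseteq M$ for every maximal subgroup $M$; this is what makes families of maximal subgroups with intersection exactly $\Phi(G)$ special.

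For part (2) I would take an irredundant set $\{M_1,\dots,M_t\}$ of maximal subgroups with $t=\maxdim(G)$ and form its partial intersections $H_0=G$ and $H_i=M_1\cap\dots\cap M_i$ for $1\le i\le t$; these all lie in $\mathcal M(G)$. The only thing to check is that the chain is strictly descending: by irredundancy $\bigcap_{j\ne i}M_j\not\subseteq M_i$, and since $\bigcap_{j\ne i}M_j\subseteq\bigcap_{j<i}M_j=H_{i-1}$ this already forces $H_{i-1}\not\subseteq M_i$, i.e.\ $H_{i-1}>H_i$. So $G=H_0>H_1>\dots>H_t$ is a chain of length $t$ in $\mathcal M(G)$; since any chain in the finite poset $\mathcal M(G)$ is contained in an unrefinable one, necessarily of length at least $t$, we get $\manta(G)\ge t=\maxdim(G)$.

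For part (1) I would reverse the construction. Fix an unrefinable chain $G=H_0>H_1>\dots>H_\ell=\Phi(G)$ realizing $\menta(G)=\ell$. For each $i$ write $H_i$ as an intersection of maximal subgroups; since $H_{i-1}\not\subseteq H_i$, at least one of them, say $M_i$, fails to contain $H_{i-1}$ (and of course $H_i\subseteq M_i$). Then $H_i\subseteq H_{i-1}\cap M_i\subsetneq H_{i-1}$ with $H_{i-1}\cap M_i\in\mathcal M(G)$; if $H_{i-1}\cap M_i$ were distinct from $H_i$ it would be a member of $\mathcal M(G)$ strictly between the consecutive terms $H_{i-1}$ and $H_i$, contradicting unrefinability, so $H_{i-1}\cap M_i=H_i$. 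Iterating gives $M_1\cap\dots\cap M_\ell=H_\ell=\Phi(G)$. To finish I would invoke the remark that any set $S$ of maximal subgroups with $\bigcap S=\Phi(G)$ satisfies $\mindim(G)\le|S|$: deleting redundant members of $S$ one at a time leaves the intersection unchanged, so we reach an irredundant $S^{\ast}\subseteq S$ with $\bigcap S^{\ast}=\Phi(G)$, and $S^{\ast}$ is necessarily \emph{maximal} irredundant, for if $S^{\ast}\subsetneq T$ with $T$ irredundant and $M\in T\setminus S^{\ast}$ then $\Phi(G)\subseteq\bigcap(T\setminus\{M\})\subseteq\bigcap S^{\ast}=\Phi(G)$, so removing $M$ from $T$ does not enlarge the intersection, contradicting irredundancy of $T$. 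Applying this to $\{M_1,\dots,M_\ell\}$ yields $\mindim(G)\le\ell=\menta(G)$.

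I do not expect a real obstacle here — the statement is billed as easy — but two points deserve care. First, it is unrefinability of the chain, not merely the fact that it is strictly descending, that is needed to pin $H_{i-1}\cap M_i$ down to $H_i$ in part (1). Second, the bound on $\mindim(G)$ comes out most cleanly if one does not try to keep the family $\{M_1,\dots,M_\ell\}$ irredundant during the construction, but instead settles for a family of maximal subgroups with intersection exactly $\Phi(G)$ and then prunes it, exploiting $\Phi(G)\subseteq M$ for all maximal $M$ to see that no irredundant family with intersection $\Phi(G)$ can be enlarged.
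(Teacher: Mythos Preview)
Your proof is correct and follows essentially the same approach as the paper: for part~(2) you build a chain from the partial intersections of an irredundant family, and for part~(1) you peel off one maximal subgroup per step of a shortest unrefinable chain (using unrefinability to force $H_{i-1}\cap M_i=H_i$), then prune to an irredundant subfamily with intersection $\frat(G)$, which is automatically maximal irredundant. The paper does the same, only passing to $G/\frat(G)$ at the outset instead of keeping $\frat(G)$ explicit; your exposition is in fact more detailed than the paper's at the point where one must justify $K_i=\bigcap_{j\le i}M_j$.
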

  
   However the differences $\menta(G)-\mindim(G)$ and $\manta(G)-\maxdim(G)$ can be arbitrarily large. We will see in Section \ref{superss} that for any pair $(a,b)$ of positive integers with $2\leq a\leq b,$ it can be constructed a finite soluble group $G$ with $\mindim(G)=a+b,$ $\menta(G)=2a+b,$ $\maxdim(G)=a+b,$ $\manta(G)=a+2b.$ 
   
   \
   
   It is more difficult to compare $\maxdim(G)$ and $\menta(G).$ In the example mentioned above $\menta(G)-\maxdim(G)=a$ can be chosen to be arbitrarily large. However  if $p$ is a prime, then $\lim_{p\to \infty}\maxdim(\perm(p))-\menta(\perm(p))=\infty.$
   
 \begin{defn}
 	We say that a finite group $G$ is strongly minmax if $\maxdim(G)=\menta(G)$. 
 \end{defn}
 By Theorem \ref{facile}, if $G$ is strongly minmax then $\mindim(G)=$ $\maxdim(G)$ $=\menta(G)=\manta(G).$  This occurs for example when $G$ is nilpotent or, more in general, if there exists a finite nilpotent group $K$ with $\mathcal M(G)\cong \mathcal M(K).$ 
 The group with these property have been studied in \cite{join}, where it is proved in particular that they are supersoluble. Notice that $\sym(4)$ is a strongly minmax group
 which is not supersoluble.
 
 \begin{defn}Let $G$ be a finite group. We define $\alpha(G)$ as the smallest cardinality of a family of maximal subgroups of $G$ with the property that their intersection coincide with the Frattini subgroup of $G$.
 \end{defn}

Clearly $\mindim(G)\leq \alpha(G)\leq \manta(G).$ In particular, if $G$ strongly minmax, then
$\mindim(G) = \alpha(G) = \manta(G).$ This motivates the following definition.
 
 \begin{defn}
 	We say that a finite group $G$ is weakly minmax if $\menta(G)=\manta(G)=\alpha(G).$
 \end{defn}

   Our main theorem is the following.
   \begin{thm}
   	If $G$ if a finite weakly minmax group, then $G$ is soluble. Moreover  the derived length of $G/\frat(G)$ is at most 3.
   \end{thm}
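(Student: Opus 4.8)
The plan is to recast the hypothesis into a more usable form, reduce to the Frattini-free case, prove solubility via a minimal counterexample together with CFSG, and then pin down the derived length using the crown structure of the socle.

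\smallskip
\noindent\textbf{Step 1: reformulation and reduction to $\frat(G)=1$.} I would first note that every covering relation of $\mathcal{M}(G)$ has the form $H\gtrdot H\cap M$ for a single maximal subgroup $M$ of $G$; hence from a shortest unrefinable chain of $\mathcal{M}(G)$ (which runs from $G$ to $\frat(G)$) one extracts, after deleting redundant members, an irredundant family of maximal subgroups of size at most $\menta(G)$ with intersection $\frat(G)$, so that $\alpha(G)\le\menta(G)$. Since $\alpha(G)\le\menta(G)\le\manta(G)$ always holds, the hypothesis $\menta(G)=\manta(G)=\alpha(G)$ is equivalent to the single equality $\alpha(G)=\manta(G)$, and it forces $\mathcal{M}(G)$ to be graded of rank $d:=\alpha(G)=\maxdim(G)$ (an irredundant family of size $\alpha(G)$ being available, $\maxdim(G)\ge\alpha(G)$, while $\maxdim(G)\le\manta(G)$). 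Since $\frat(G)$ lies in every maximal subgroup, $\mathcal{M}(G)\cong\mathcal{M}(G/\frat(G))$ and $\frat(G/\frat(G))=1$, so $G$ is weakly minmax if and only if $G/\frat(G)$ is; thus I may assume $\frat(G)=1$ and must show $G$ is soluble with $\operatorname{dl}(G)\le 3$.

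\smallskip
\noindent\textbf{Step 2: solubility.} Suppose not and let $G$ be a counterexample of least order, so $\frat(G)=1$, $\alpha(G)=\manta(G)$ and $G$ is not soluble. Here every minimal normal subgroup $N$ is complemented in $G$. The key point I would establish is that such a quotient $G/N$ is again weakly minmax — for abelian $N$ this should follow from the fact that $\mathcal{M}(G/N)$ is realised as a graded interval (above a suitable subgroup) inside the graded poset $\mathcal{M}(G)$ — so that, by minimality of $|G|$, $G/N$ is soluble, whence $N$ must be non-abelian. If $N_1\neq N_2$ were two minimal normal subgroups, $G$ would embed in the soluble group $G/N_1\times G/N_2$, a contradiction; so $G$ has a unique minimal normal subgroup $N=S^k$ with $S$ non-abelian simple, and $C_G(N)=1$ (any nontrivial $C_G(N)$ would contain a minimal normal subgroup, necessarily $N$). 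Thus $S^k\le G\le\aut(S)\wr\sym(k)$. Now I would obtain a contradiction by showing that no such group with trivial Frattini subgroup satisfies $\menta(G)=\manta(G)$: $S$ has maximal subgroups of widely different index (for instance a point stabiliser of a $2$-transitive action against a subgroup of small order), and exploiting this — in the spirit of the arguments of \cite{bgl} proving that non-abelian simple groups are not minmax — one builds two unrefinable chains of $\mathcal{M}(G)$ of unequal length, contradicting gradedness. This step uses the classification of finite simple groups.

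\smallskip
\noindent\textbf{Step 3: the derived length bound.} Now let $G$ be soluble with $\frat(G)=1$ and $\alpha(G)=\manta(G)$. Then $\fit(G)=\soc(G)=V$ is abelian, complemented in $G$, and $V$ is a faithful completely reducible module for $H:=G/\fit(G)$; write $G=V\rtimes H$. Since $(V\rtimes H)''\le V\rtimes H''$ and $(V\rtimes H)'''\le [V,H'']\le V$, which is abelian, it suffices to prove that $H$ is metabelian. Decompose $V=\bigoplus_i V_i$ into $H$-homogeneous components, $V_i\cong A_i^{m_i}$ with $A_i$ irreducible. The maximal subgroups of $G$ are of two kinds: those containing $V$, induced from maximal subgroups of $H$, and those of the form $U\rtimes L$ with $V/U$ an $H$-chief factor and $L$ a complement; from this description one can express both $\alpha(G)$ and $\manta(G)$ in terms of the $A_i$, the multiplicities $m_i$, and the maximal-subgroup structure of $H$ acting on the modules $V_i$. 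The equality $\alpha(G)=\manta(G)$ is then very restrictive: a section of $H$ of derived length $3$ acting nontrivially on some $V_i$ would allow a maximal chain of $\mathcal{M}(G)$ strictly longer than the shortest family of maximal subgroups meeting in $1$ (this is exactly the mechanism behind the soluble examples of Section \ref{superss}, where $\manta$ exceeds $\maxdim$), violating the equality. Checking the finitely many obstructing configurations yields $H''=1$, hence $\operatorname{dl}(G)=\operatorname{dl}(G/\frat(G))\le 3$; $\sym(4)$ shows the bound is sharp.

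\smallskip
\noindent\textbf{Main obstacle.} The crux is Step 2. One must (a) prove that weak minmax (or at least the chain-length inequalities it encodes) passes to quotients $G/N$, which requires a good grip on how $\mathcal{M}(G/N)$ sits inside the graded poset $\mathcal{M}(G)$, so that a minimal counterexample is forced to be of almost-simple type; and (b) give a uniform CFSG-based proof that groups $G$ with $S^k\le G\le\aut(S)\wr\sym(k)$ and $\frat(G)=1$ have $\menta(G)<\manta(G)$, sharpening the ``not minmax'' results of \cite{bgl}. The crown bookkeeping in Step 3 is a second, more routine, technical hurdle.
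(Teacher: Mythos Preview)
Your overall architecture matches the paper's, and your Step~1 reformulation (the observation that $\alpha(G)\le\menta(G)$ always, so weakly minmax is equivalent to the single equation $\alpha(G)=\manta(G)$, and $\mathcal M(G)$ is then graded) is correct and clean. You are also right that Step~2 is the crux. But Steps~2 and~3 as written are outlines, and the ingredients actually needed are not the ones you gesture at.

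In Step~2 your reduction via an abelian minimal normal $N$ is shakier than you indicate: one does not know $\frat(G/N)=1$, so $N$ need not belong to $\mathcal M(G)$ and there is no evident ``graded interval'' embedding of $\mathcal M(G/N)$ into $\mathcal M(G)$. The paper avoids this by passing instead to $G/C_G(H/K)$ for a non-abelian chief factor $H/K$; that quotient is monolithic with non-abelian socle, hence Frattini-free, so $C_G(H/K)\in\mathcal M(G)$ and the quotient lemma applies cleanly. More importantly, your plan to produce two unrefinable chains of different length from ``maximal subgroups of widely different index'' is not what works, and it is unclear how to make such an argument uniform across all $G$ with $S^n\le G\le\aut(S)\wr\sym(n)$. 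The paper's mechanism is numerical: for almost simple $X$ set $\sigma(X)$ to be the longest strictly descending chain of sets $Y^{s_1}\cap\cdots\cap Y^{s_i}\cap S$ for a fixed core-free maximal $Y$, and $\tau(X)$ the minimum number of core-free maximal subgroups with trivial intersection. A short Frobenius-complement argument gives $\sigma(X)\ge3$, while a theorem of Burness--Garonzi--Lucchini gives $\tau(X)\le4$. Embedding $G$ in $X\wr T$ one obtains $\manta(G)\ge t+n\,\sigma(X)$ and $\alpha(G)\le t+\tau(X)+(n-1)$ (the latter using a Cameron--Solomon--Turull chain bound for nilpotent permutation groups), whence $n(\sigma(X)-1)<\tau(X)$, i.e.\ $2n<4$, forcing $n=1$; the almost simple case is then handled by explicit base-size and $\alpha$-computations from \cite{bgl,spo}. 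In Step~3 the decisive external input you are missing is Seress's theorem that a primitive soluble linear group has base size at most~$3$, giving $\alpha(G)\le4$ for each primitive monolithic soluble quotient $G=V\rtimes H$; combined with $m(G)\le\maxdim(G)\le\manta(G)=\alpha(G)$ this yields $m(G)\le4$, and a short case analysis on $m(G)\in\{2,3,4\}$ (invoking Wolf's regular-orbit theorem for supersoluble linear groups and the Halasi--Podoski coprime base-size theorem) produces the bound~$3$. Your ``checking the finitely many obstructing configurations'' conceals precisely this content.
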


The bound 3 on the derived length is best possible, since, for example, $\perm(4)$ is weakly minmax.	Notice that $\menta(\alt(5))=\manta(\alt(5))=3,$ hence the condition  $\menta(G)=\manta(G)$ does not imply that $G$ is soluble.

\section{Negative answers to questions \ref{quone} and \ref{quodue}}

Our first aim is to give a negative answer to question \ref{quone}.
Let $\FF$ be the field with $11$ elements and let $C=\langle c \rangle $ be the subgroup of order $5$ of the multiplicative group of $\FF.$
Let $V=\FF^5$
be a $5$-dimensional vector space over $\FF$ and let $\sigma=(1,2,3,4,5)\in \perm(5).$  The wreath group $H=C\wr \langle \sigma \rangle$ has an irreducible action on $V$ defined as follows:
if $v=(f_1,\dots,f_5)\in V$ and $h=(c_1,\dots,c_5)\sigma \in H$, then $v^h=(f_{1\sigma^{-1}}c_{1\sigma^{-1}},\dots,f_{5\sigma^{-1}}c_{5\sigma^{-1}}).$
We will concentrate our attention on the semidirect product
$G:=V\rtimes H$ (notice that $G=G_{11,5}$ in the notations of \cite[Section 3]{delu}). By \cite[Proposition 11]{delu} $\maxd(G)=5$, while $H\cong G/V$ is a 2-generated nilpotent group with  
$\maxd(H)$=2. Let $M_1, M_2$ be two different maximal subgroups of $G$ containing $V.$ We have $M_1=V\rtimes K_1$ and $M_2=V\rtimes K_2$ with $K_1$ and $K_2$  maximal subgroups of $H.$ Assume, by contradiction, that $\{M_1,M_2\}$ can be lifted to an irredundant  set $\{M_1,M_2,M_3,M_4,M_5\}$ of maximal subgroups of $G.$ Then  $M_3,M_4,M_5$ are complements of $V$ in $G$ and it is not restrictive to assume $M_3=H,$ $M_4=H^{v_1},$ $M_5=H^{v_2}$ with $v_1=(x_1,x_2,x_3,x_4,x_5), v_2=(y_1,y_2,y_3,y_4,y_5)\in V.$ We must have $|M_3\cap M_4|\geq 5^3,$ hence, by \cite[Lemma 10]{delu}, the subset $I$ of $\{1,\dots,5\}$ consisting of the indices $i$ with $x_i=0$  contains at least 3 elements and $M_3\cap M_4=\{(c_1,c_2,c_3,c_4,c_5)\in C_5^5\mid c_i=1 \text{ if }i\neq I\}.$ Notice that $M_1\cap M_2=V\rtimes F$ with $F=\frat H=
\{(c_1,c_2,c_3,c_4,c_5)\in C_5^5\mid c_1c_2c_3c_4c_5=1\}.$ In particular
$M_1\cap M_2\cap M_3\cap M_4=\{(c_1,c_2,c_3,c_4,c_5)\in C_5^5\mid c_1c_2c_3c_4c_5=1 \text { and }c_i=1 \text{ if }i\neq I\},$
but then $|M_3\cap M_4:M_1\cap M_2\cap M_3\cap M_4|=5$, a contradiction.

\

Now we give a negative answer to question \ref{quodue}. Let $G=\AGL(2,5),$  $N
=\soc(G)$ and $F/N=\frat(G/N).$ We have $N\cong C_5\times C_5$, $F/N\cong \frat(\gl(2,5))\cong C_4$ and $G/F\cong \perm(5).$ For a maximal subgroup $M$ of $G$ we have the following possibilities:
\begin{enumerate}
	\item $M$ is a complement of $N$ in $G$ (25 conjugates);
	\item $F\leq M$ and $M/F \cong \alt(5)$ (1 conjugate);
	\item $F\leq M$ and $M/F \cong \perm(4)$ (5 conjugates);
	\item $F\leq M$ and $M/F \cong C_5\rtimes C_4$ (6 conjugates);
	\item $F\leq M$ and $M/F \cong \perm(3)\times \perm(2)$ (10 conjugates).
\end{enumerate}
Let $H$ be a complement of $N$ in $G$ and let $\{K_1,\dots,K_4\}$ be an irredundant family of maximal subgroups of $G$ of type 3.
Then  $\{H\cap K_1,\dots,H\cap K_4\}$ is an irredundant family of maximal subgroups of $H$  and $H\cap K_1\cap \dots \cap K_4=H\cap F=\frat(H)\cong C_4.$
In particular $\maxd(G)\geq 5.$ Now assume that $\mathcal M=\{M_1,\dots,M_t\}$ is an irredundant family of maximal subgroups of $G$ with $M_1\cap \dots \cap M_t=1.$ At least one of these maximal subgroups of $G$ must be of type (1), otherwise $M_1\cap \dots \cap M_t\geq F.$ So it is not restrictive to assume $M_1=H.$ We distinguish two possibilities:

\noindent a) $M_i$ is not of type (1), whenever $i\geq 2.$ In this case, for $i\geq 2$ there exists a maximal subgroup $K_i$ of $H$ such that $M_i=NK_i$. But then $M_1\cap M_2 \cap \dots \cap M_t= K_2\cap \dots \cap K_t\geq \frat(H)\cong C_4,$ a contradiction.

\noindent b) $M_2$ is of type (1). We have $M_2=H^n$ for some $1\neq n\in N$ and $D=M_1\cap M_2=H\cap H^n=C_H(n)\cong C_5\rtimes C_4.$ If $X$ is a maximal subgroup of $G$ of type (3), then $X\cap D\cong C_4$, and consequently,  if $X_1$ and $X_2$ are two different maximal subgroups of type (3), then $X_1\cap X_2\cap D =1$. Hence either $t\leq 4$ or $\mathcal M$ contains at most one maximal subgroup of type (3). One of the 6 maximal subgroups of $G$ of type (4) contains $D,$ the other intersect $D$ in a subgroup of order $4$, moreover if  $Y_1$ and $Y_2$ are two different maximal subgroups of type (4) not containing $D$, then $Y_1\cap Y_2\cap D =1$. Hence either $t\leq 4$ or $\mathcal M$ contains at most one maximal subgroup of type (4). If $Z$ is a maximal subgroup of $G$ of type (5), then $Z\cap D\cong C_2$.
Hence if $\mathcal M$ contains a maximal subgroup of type (5) then $t\leq 4.$
Summarizing we have proved that either $t\leq 4 <\maxd(G)$ or $t=5$ and in that case we may assume $M_3$ of type (2), $M_4$ of type (3) and $M_5$ of type (4).
However this case cannot occur since it can be easily checked that if $X$ is of type (3) and $Y$ is of type (4) and does not contains $D$, then either $X\cap Y\cap D=1$ or $X\cap D=Y\cap D.$

\

Although question \ref{quodue} has a negative answer, a weaker result holds.

\begin{prop}\label{frate}
 If $\{M_1,\dots,M_t\}$ is a maximal irredundant family of maximal subgroups, then $\core_G(M_1\cap\dots\cap M_t)=\frat(G).$
\end{prop}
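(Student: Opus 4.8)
The plan is to set $D:=M_1\cap\dots\cap M_t$ and $C:=\core_G(D)$, the largest normal subgroup of $G$ contained in $D$, and to prove the two inclusions $\frat(G)\le C$ and $C\le\frat(G)$ separately. The first is immediate: every maximal subgroup of $G$ contains $\frat(G)$, so $\frat(G)\le D$, and since $\frat(G)\trianglelefteq G$ it is contained in the largest normal subgroup of $G$ lying inside $D$, namely $C$.

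For the reverse inclusion I would argue by contradiction, assuming $C\not\le\frat(G)$. Then there is a maximal subgroup $M$ of $G$ with $C\not\le M$, and since $C\trianglelefteq G$ this gives $G=CM$. Because $C\le D\le M_i$ for every $i$ while $C\not\le M$, the subgroup $M$ is not one of $M_1,\dots,M_t$, so $\{M_1,\dots,M_t,M\}$ properly contains the given family; by maximality of the latter, $\{M_1,\dots,M_t,M\}$ is \emph{not} irredundant. Hence the intersection $D\cap M$ of the whole $(t+1)$-element family equals the intersection of a proper subfamily, and after enlarging that subfamily if necessary (an intersection squeezed between two others both equal to $D\cap M$ is itself $D\cap M$) I may assume the subfamily is obtained by deleting exactly one member. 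Deleting $M$ would give $D=D\cap M$, i.e.\ $D\le M$ and hence $C\le D\le M$, contrary to the choice of $M$; so the deleted member is some $M_j$, and writing $D_j:=\bigcap_{i\ne j}M_i$ we obtain $M\cap D_j=M\cap D$.

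The crux is then a short application of Dedekind's modular law. Since $C\le D\le D_j$ and $G=CM$, the modular law yields $D_j=D_j\cap CM=C\,(D_j\cap M)=C\,(M\cap D)$. But $M\cap D\le D$ and $C\le D$, so $D_j\le CD=D$; together with $D\le D_j$ this forces $D_j=D$, contradicting the irredundancy of $\{M_1,\dots,M_t\}$, which asserts precisely $D\ne\bigcap_{i\ne j}M_i$. This contradiction shows $C\le\frat(G)$, and combined with the first inclusion we conclude $\core_G(M_1\cap\dots\cap M_t)=\frat(G)$.

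I expect the only genuinely delicate point to be the bookkeeping in the middle paragraph — translating \lq\lq $\{M_1,\dots,M_t,M\}$ is not irredundant\rq\rq{} into the clean statement that deleting a single $M_j$ leaves the intersection unchanged, and ruling out the case where $M$ itself is the redundant member. Once that reformulation is in hand, the normality of $C$ together with the factorization $G=CM$ makes the modular-law step essentially forced, and the argument closes immediately.
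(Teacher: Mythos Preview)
Your proof is correct and follows essentially the same route as the paper's: both argue by contradiction, pick a maximal subgroup $M$ (the paper's $Y$) not containing the normal subgroup, use maximal irredundancy to force some $M_j$ to be redundant in the enlarged family, and then apply Dedekind's modular law with the factorization $G=CM$ (the paper uses $U=N(U\cap Y)$) to contradict irredundancy of the original family. The only cosmetic difference is that the paper first reduces to $\frat(G)=1$ and works with an arbitrary nontrivial normal $N\le X$, while you work directly with $C=\core_G(D)$ and prove both inclusions explicitly.
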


\begin{proof}
	It is not restrictive to assume $\frat(G)=1.$ Let $\{M_1,\dots,M_t\}$ be a maximal irredundant family of maximal subgroups of $G$ and let $X=
	M_1\cap\dots\cap M_t.$ Assume, by contradiction, that $X$ contains a nontrivial normal subgroup, say $N$, of $G.$ Since $\frat(G)=1,$ there exists a maximal subgroup $Y$ of $G$ with $N\not\leq Y.$ Since $\{M_1,\dots,M_t\}$ is a maximal irredundant family, the family $\{M_1,\dots,M_t,Y\}$ is not irredundant.
	On the other hand, since $N\leq M_1\cap\dots\cap M_t,$ we cannot have
	$M_1\cap\dots\cap M_t\leq Y.$ So, up to reordering, we may assume 
	$M_1\cap\dots\cap M_{t-1}\cap Y\leq M_t.$ Let $U=M_1\cap\dots\cap M_{t-1}.$ Since $N\leq U,$ by the Dedekind law, $N(U\cap Y)=U\cap NY=U.$
	From $U\cap Y\leq M_t,$ it follows $U=N(U\cap Y)\leq NM_t=M_t$, against the assumption that $\{M_1,\dots,M_t\}$ is an irredundant family.
\end{proof} 

\begin{question}Is the answer to question \ref{quone} negative also in the case of finite soluble groups?
	\end{question}

\section{An Example}\label{superss}

Let $H$ be a cyclic group of order $m:=p^aq^b,$ where  $p$ and $q$ two different primes and $2\leq a \leq b.$ For any divisor $r$ of $m,$ denote by $H_r$ the unique subgroup of $H$ of order $r.$
Let $I:=\{p,p^2,\dots,p^{a-1},q,q^2,\dots,q^{b-1}\}.$ For any $i\in I,$ let $p_i$ be a prime such that $m/i$ divides $p_i-1$ and let $X_i\cong C_{p_i}.$ We have an action of $H$ on $X_i$ with kernel  $H_i$. Let $X:=\prod_{i\in I}X_i$ and $G:=X\rtimes H.$ The maximal subgroups of $G$ are the following:
\begin{enumerate}
	\item $A_p:=X\rtimes H_{m/p},$ $A_q:=X\rtimes H_{m/q};$
	\item $B_{p^r,x}:=(\prod_{i\neq p^r}X_i)\rtimes H^x$, with $r\in \{1,\dots,a-1\}$ and $x\in X_{p^r};$
	\item $B_{q^s,x}:=(\prod_{i\neq q^s}X_i)\rtimes H^y$, with $s\in \{1,\dots,b-1\}$ and $y\in X_{q^s}.$
\end{enumerate}
Now we study the intersections of these maximal subgroups.
Notice that if $t\geq 2$ and $x_1,\dots,x_t$ are distinct elements
of $X_{p^r},$ then 
\begin{equation}
B_{p^r,x_1}\cap \dots \cap B_{p^r,x_t}=B_{p^r,x_1}\cap  B_{p^r,x_2}=\left(\prod_{i\neq p^r}X_i\right)\rtimes H_{p^r}.
\end{equation}
Similarly, if $t\geq 2$ and $y_1,\dots,x_t$ are distinct elements
of $X_{q^s},$ then 
\begin{equation}
B_{q^s,x_1}\cap \dots \cap B_{q^s,x_t}=B_{q^s,x_1}\cap  B_{q^s,x_2}=\left(\prod_{i\neq q^s}X_i\right)\rtimes H_{q^s}.
\end{equation}
Let $\mathcal Y$ be a family of maximal subgroups of $G$. For any $i\in I,$ let $\mathcal{Y}_i$ be the set of subgroups in $\mathcal Y$ of the form $B_{i,x},$ for some $x\in X_i.$ Moreover define
\begin{itemize}
	\item $I_0(\mathcal Y):=\{i\in I\mid \mathcal{Y}_i=\emptyset\},$
	\item $I_1(\mathcal Y):=\{i\in I\mid |\mathcal{Y}_i|=1\},$
	\item  $I_2(\mathcal Y):=\{i\in I\mid |\mathcal{Y}_i|>1\}.$
\end{itemize}
For any $i\in I_1(\mathcal Y),$ there exists a unique $x_{i,\mathcal Y}\in X_i$ such that $B_{i,x_{i,\mathcal Y}}\in \mathcal Y.$ Let
$$x_{\mathcal Y}:=\prod_{i\in I_1(\mathcal Y)}x_{i,\mathcal Y}.$$
Finally set 
$$\tau_p(\mathcal Y)=\min \{a,r\mid p^r\in I_2(\mathcal Y)\},
\tau_q(\mathcal Y)=\min \{b,s\mid q^s\in I_2(\mathcal Y)\}$$ and define
$$L_{\mathcal Y}:=\begin{cases}H &\text {if $A_p, B_p \notin \mathcal Y$,}\\
H_{p^{a-1}q^b} &\text {if $A_p \in \mathcal Y$,   $B_p \notin \mathcal Y,$}\\
H_{p^{a}q^{b-1}} &\text {if $A_p \notin \mathcal Y$,   $B_p \in \mathcal Y,$}\\
H_{p^{a-1}q^{b-1}} &\text {if $A_p \in \mathcal Y$,   $B_p \in \mathcal Y.$}\\
\end{cases}$$
\begin{enumerate}
	\item If $\tau_p(\mathcal Y)=a$ and $\tau_q(\mathcal Y)=b$, then
	$$\bigcap_{Y\in \mathcal Y}Y=\left(\prod_{i\in I_0(\mathcal Y)}X_i\right)L_{\mathcal Y}^{x_\mathcal Y}. 
	$$
	In this case $|\mathcal Y|=|I_1(\mathcal Y)|+|\mathcal Y\cap \{A_p, A_q\}|\leq a+b+2.$ Moreover, if $\mathcal Y$ is a maximal irredundant family, then by Proposition \ref{frate},
	$I_0(\mathcal Y)=\emptyset$, $I_1(\mathcal Y)=I$ and $A_p, A_q\in \mathcal Y.$ This implies $|\mathcal Y|=a+b.$

	\item If $\tau_p(\mathcal Y)<a$ and $\tau_q(\mathcal Y)=b$, then, setting $t=\tau_p(\mathcal Y),$ we have
	$$\bigcap_{Y\in \mathcal Y}Y=
	\left(\prod_{i\in I_0(\mathcal Y)}X_i\right)H_{p^t}^{x_\mathcal Y}. 
	$$
	Let $y_1,y_2$ be two different elements in $X_q.$ If follows from the fact that $\cap_{Y\in \mathcal Y}Y \cap X_{q,y_1}\cap X_{q,y_2}=1$ that $\mathcal Y$ cannot be a maximal irredundant family.
	
	\item If $\tau_p(\mathcal Y)=a$ and $\tau_q(\mathcal Y)<b$, then, setting $t=\tau_q(\mathcal Y),$ 
	we have
	$$\bigcap_{Y\in \mathcal Y}Y=
	\left(\prod_{i\in I_0(\mathcal Y)}X_i\right)H_{q^t}^{x_\mathcal Y}. 
	$$
		Let $y_1,y_2$ be two different elements in $X_p.$ If follows from the fact that $\cap_{Y\in \mathcal Y}Y \cap X_{p,y_1}\cap X_{p,y_2}=1$ that $\mathcal Y$ cannot be a maximal irredundant family.
	\item If $\tau_p(\mathcal Y)<a$ and $\tau_q(\mathcal Y)<b$, then
	$$\bigcap_{Y\in \mathcal Y}\!Y\!=
	\prod_{i\in I_0(\mathcal Y)}\!\!X_i.
	$$
Moreover, if $\mathcal Y$ is a maximal irredundant family, then by Proposition \ref{frate},
$I_0(\mathcal Y)=\emptyset$, $I_1(\mathcal Y)=I\setminus\{p^t,q^u\},$ $A_p, A_q\notin \mathcal Y.$ This implies $|\mathcal Y|=a+b.$
\end{enumerate} 
From the previous discussion, it follows:
\begin{prop}
	$\mindim(G)=\maxdim(G)=a+b.$
\end{prop}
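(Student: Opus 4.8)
The plan is to read off both invariants directly from the detailed case analysis (1)--(4) already carried out, which classifies the intersections $\bigcap_{Y\in\mathcal Y}Y$ and, in each case, determines which families $\mathcal Y$ can be maximal irredundant. First I would establish $\mindim(G)=\maxdim(G)=a+b$ by showing that \emph{every} maximal irredundant family $\mathcal Y$ of maximal subgroups of $G$ has exactly $a+b$ elements; since $\mindim(G)$ is the minimum and $\maxdim(G)$ the maximum size of such a family, both equalities follow at once.

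The argument splits according to the values of $\tau_p(\mathcal Y)$ and $\tau_q(\mathcal Y)$, exactly as in (1)--(4). In cases (2) and (3) it has already been observed that $\mathcal Y$ cannot be maximal irredundant: if $\tau_q(\mathcal Y)=b$ but $\tau_p(\mathcal Y)<a$ (case (2)), then $\bigcap_{Y\in\mathcal Y}Y$ meets $X_{q,y_1}\cap X_{q,y_2}$ trivially for distinct $y_1,y_2\in X_q$, so one can adjoin some $B_{q,y}$ and keep irredundancy; case (3) is symmetric. Hence a maximal irredundant family must fall under case (1) or case (4). In case (1), Proposition \ref{frate} forces $\core_G(\bigcap_{Y\in\mathcal Y}Y)=\frat(G)$; since $\frat(G)=1$ here (each $X_i$ is a minimal normal subgroup on which $H$ acts nontrivially, and $H$ is complemented), the formula $\bigcap_{Y\in\mathcal Y}Y=\bigl(\prod_{i\in I_0(\mathcal Y)}X_i\bigr)L_{\mathcal Y}^{x_{\mathcal Y}}$ together with normality of $\prod_{i\in I_0(\mathcal Y)}X_i$ forces $I_0(\mathcal Y)=\emptyset$; likewise $L_{\mathcal Y}$ must be core-free, which (together with irredundance, so that $\mathcal Y$ is not missing a needed $B_i$) forces $I_1(\mathcal Y)=I$ and $A_p,A_q\in\mathcal Y$, giving $|\mathcal Y|=|I|+2=(a-1)+(b-1)+2=a+b$. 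In case (4), the same application of Proposition \ref{frate} gives $I_0(\mathcal Y)=\emptyset$; writing $t=\tau_p(\mathcal Y)$, $u=\tau_q(\mathcal Y)$, irredundance and maximality pin down $I_1(\mathcal Y)=I\setminus\{p^t,q^u\}$ and $A_p,A_q\notin\mathcal Y$, so again $|\mathcal Y|=|I|=a+b$ after accounting for the two indices in $I_2(\mathcal Y)$ each contributing exactly two subgroups by the collapse formulas \mbox{(3.1)--(3.2)}.

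The only point requiring a little care—and the step I expect to be the main obstacle—is verifying that the constraints ``maximal irredundant'' really do force the combinatorial configurations claimed in (1) and (4), rather than merely being consistent with them. Concretely, one must check: (a) that in case (1) one cannot have $|\mathcal Y_i|>1$ for any $i\in I$ without violating maximality (indeed $\mathcal Y_i$ with $|\mathcal Y_i|\geq 2$ would put $\tau_p$ or $\tau_q$ below its maximum, landing in case (2)--(4)); (b) that dropping any $A_p$ or $A_q$ when it could legitimately be added contradicts maximality, using $\frat(G)=1$ to produce the missing maximal subgroup as in the proof of Proposition \ref{frate}; and (c) in case (4), that exactly the indices $p^t,q^u$ carry $|\mathcal Y_i|>1$ and all other indices carry $|\mathcal Y_i|=1$, which follows because any $i\in I_0(\mathcal Y)$ would give $X_i\leq\bigcap_{Y\in\mathcal Y}Y$ and hence a nontrivial normal subgroup in the core, contradicting Proposition \ref{frate}, while extra indices in $I_2(\mathcal Y)$ beyond the two minimal ones would again contradict maximality since adjoining a further $B_{i,x}$ strictly shrinks the intersection. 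Once these bookkeeping facts are in place, every maximal irredundant family has size exactly $a+b$, and therefore $\mindim(G)=\maxdim(G)=a+b$.
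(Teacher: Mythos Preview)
Your approach is exactly the paper's: the proposition is read off directly from the case analysis (1)--(4), since every maximal irredundant family falls in case (1) or (4) and in both cases has size $a+b$. Two small slips in your bookkeeping are worth correcting: in case (1), $L_{\mathcal Y}^{x_{\mathcal Y}}$ is automatically core-free once $I_0(\mathcal Y)=\emptyset$, so core-freeness is not what forces $A_p,A_q\in\mathcal Y$---rather, if (say) $A_p\notin\mathcal Y$ then $\mathcal Y\cup\{A_p\}$ is still irredundant, contradicting maximality; and in case (4), extra indices in $I_2(\mathcal Y)$ beyond $\{p^t,q^u\}$ are ruled out by \emph{irredundance} (via the collapse formulas (3.1)--(3.2), since $H_{p^t}\leq H_{p^{t'}}$ centralises $X_{p^{t'}}$ for $t'>t$), not by maximality. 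Also note $|I|=a+b-2$, so in case (4) the count is $|I_1(\mathcal Y)|+2\cdot 2=(a+b-4)+4=a+b$.
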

Clearly we have an unrefinable chain in $\mathcal M(G)$ of length $|I|=a+b-2$ from $H$ to $G$ and a chain of length 2 from $H_{p^{a-1}q^{b-1}}$ to $H$. Moreover we have the following two unrefinable chains from $1$ to $H_{p^{a-1}q^{b-1}}:$
$$1<H_p\leq \dots \leq H_{p^{a-1}} \leq H_{p^{a-1}q^{b-1}},\quad \quad
1<H_q\leq \dots \leq H_{q^{b-1}} \leq H_{p^{a-1}q^{b-1}}.$$
In particular we may easily conclude:
	\begin{prop}
		$\menta(G)=2a+b, \quad \manta(G)=a+2b.$
	\end{prop}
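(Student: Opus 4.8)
The plan is to read off the two invariants directly from the structure of the unrefinable chains in $\mathcal M(G)$, using the classification of the intersections of maximal subgroups given by cases (1)--(4) above. First I would determine exactly which subgroups occur as maximal intersections, i.e. the elements of $\mathcal M(G)\setminus\{G\}$. From the displayed formulas for $\bigcap_{Y\in\mathcal Y}Y$, every maximal intersection is of one of the forms $\left(\prod_{i\in J}X_i\right)H_r^x$ or $\left(\prod_{i\in J}X_i\right)$, where $J\subseteq I$, $x\in X$, and $r\in\{m,\,m/p,\,m/q,\,m/(pq),\,p^t\,(t<a),\,q^t\,(t<b)\}$. In particular $1$ itself is a maximal intersection (take $\mathcal Y$ covering all $X_i$ with $\tau_p,\tau_q$ small, as in case (4)), and so is $H$ and every $H_{p^{a-1}q^{b-1}}$-type subgroup.

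Next I would exhibit the chains realizing $\menta(G)=2a+b$ and $\manta(G)=a+2b$. For $\menta$, the shortest maximal irredundant family has size $\mindim(G)=a+b$; I want the shortest unrefinable chain in $\mathcal M(G)$ from $1$ to $G$. The exhibited chain $1<H_p<\dots<H_{p^{a-1}}<H_{p^{a-1}q^{b-1}}<H$, then up through the $X_i$'s to $G$, has length $(a-1)+1+2+(a+b-2)$; one has to check which of the two "legs" from $1$ to $H_{p^{a-1}q^{b-1}}$ (the $p$-leg of length $a-1$ versus the $q$-leg of length $b-1$) is forced, and that no shorter route to $G$ exists inside $\mathcal M(G)$. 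Since the top part from $H$ to $G$ contributes $|I|=a+b-2$ and is unrefinable (the $X_i$ can be adjoined one at a time, and no intermediate maximal intersection is skipped), and the bottom from $1$ to $H$ must pass through a subgroup of the form $H_{p^{a-1}q^{b-1}}^x$ (the smallest $H$-part appearing below $H$) and then through one of the two legs, the minimal total is $2+(a-1)+1+(a+b-2)=2a+b$ by choosing the shorter leg of length $a-1$ (using $a\le b$), while the maximal total is $2+(b-1)+1+(a+b-2)=a+2b$ by choosing the longer leg. The key point is to verify that these chains are genuinely \emph{unrefinable in} $\mathcal M(G)$ — each step is a covering relation in the poset $\mathcal M(G)$, not merely in the subgroup lattice — and that no longer (resp. shorter) unrefinable chain exists, which amounts to checking that the $X_i$-part and the $H$-part of a maximal intersection vary independently and that within the $H$-part the only maximal intersections between $1$ and $H$ are $H_{p^r}$ ($1\le r\le a-1$), $H_{q^s}$ ($1\le s\le b-1$), $H_{p^{a-1}q^{b-1}}$, $H_{p^{a-1}q^b}=H_{m/p}$, $H_{p^aq^{b-1}}=H_{m/q}$, and $H$ itself, together with their $X$-conjugates.

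The main obstacle I expect is the bookkeeping needed to show that a chain which \emph{looks} unrefinable in $\mathcal M(G)$ really is, i.e. that between consecutive members no further maximal intersection is interpolated, and conversely that one cannot build a longer (for $\manta$) or shorter (for $\menta$) unrefinable chain by mixing steps that simultaneously change the $X$-support and the $H$-part. Concretely, one must rule out "diagonal" maximal intersections $\left(\prod_{i\in J}X_i\right)H_r^x$ with $x$ not supported on $I\setminus J$, and confirm that adjoining a single generator $A_p$ or $A_q$, or a single $B_{i,x}$, produces exactly a one-step drop in $\mathcal M(G)$; this follows from equations (3.1)--(3.2) and the case analysis, but must be stated carefully. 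Once that is in place, both values follow by simply adding the lengths of the forced pieces, using $2\le a\le b$ to identify which leg is minimal and which is maximal, giving $\menta(G)=2a+b$ and $\manta(G)=a+2b$.
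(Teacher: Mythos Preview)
Your proposal is correct and follows essentially the same route as the paper: exhibit the unrefinable chain of length $a+b-2$ from $H$ to $G$ through the $X_i$'s, the chain of length $2$ from $H_{p^{a-1}q^{b-1}}$ to $H$, and the two legs $1<H_p<\dots<H_{p^{a-1}}<H_{p^{a-1}q^{b-1}}$ and $1<H_q<\dots<H_{q^{b-1}}<H_{p^{a-1}q^{b-1}}$, then add the lengths. The paper is in fact terser than you are --- it simply displays these chains and says ``we may easily conclude'' --- so your extra bookkeeping (checking that the displayed steps are genuine covers in $\mathcal M(G)$ and that no diagonal shortcuts exist) is exactly the verification the paper leaves implicit.
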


\section{Proof of Theorem \ref{facile} and further considerations}
\begin{lemma}\label{sub}Assume that $\{M_1,\dots,M_n\}$ is a family of maximal subgroups of $G.$ There exists $J\subseteq I:=\{1,\dots,n\},$ such that 
	\begin{enumerate}
		\item $\{M_j\mid j\in J\}$ is an irredundant family;
		\item $\cap_{i\in I}M_j=\cap_{j\in J}M_j.$
	\end{enumerate}
\end{lemma}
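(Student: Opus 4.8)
The plan is to obtain $J$ by repeatedly discarding redundant members of the family, using a minimal-counterexample / minimality argument. Concretely, among all subsets $K \subseteq I$ with $\bigcap_{k \in K} M_k = \bigcap_{i \in I} M_i$, choose one of smallest cardinality and call it $J$. Such a subset exists since $I$ itself satisfies the condition and $I$ is finite, so the collection of such subsets is a nonempty finite set of sets. By construction $J$ satisfies property (2). It remains to check property (1), namely that $\{M_j \mid j \in J\}$ is irredundant.

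For (1), suppose for contradiction that $\{M_j \mid j \in J\}$ is \emph{not} irredundant. By the definition of irredundancy, this means there is a proper subset $J' \subsetneq J$ with $\bigcap_{j \in J'} M_j = \bigcap_{j \in J} M_j$. But then, combining with property (2) of $J$, we get $\bigcap_{j \in J'} M_j = \bigcap_{i \in I} M_i$, so $J'$ is a subset of $I$ satisfying the defining condition of the minimization, yet $|J'| < |J|$, contradicting the minimality of $J$. Hence $\{M_j \mid j \in J\}$ is irredundant, which establishes (1).

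This completes the argument; there is no real obstacle here, as the statement is essentially the observation that any finite intersecting family can be pruned to an irredundant subfamily with the same intersection, and the only subtlety is making sure the notion of ``irredundant'' used in the contradiction step matches the definition given earlier (intersection not equal to that of any proper subset). One could equally phrase the proof algorithmically: while the current family has some member $M_k$ whose removal does not change the intersection, delete it; this process terminates because $|I|$ is finite and each step strictly decreases the size of the family, and when it halts the surviving family is irredundant with the same intersection as the original. I would present the minimality version since it is cleaner and avoids discussing termination.
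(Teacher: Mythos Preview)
Your proof is correct and essentially the same as the paper's: the paper argues by induction on $n$, removing a redundant $M_k$ and recursing, which is exactly the algorithmic phrasing you describe at the end; your minimality version is just a repackaging of that same idea.
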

\begin{proof}
	By induction on $n.$ If $\{M_i\mid i\in I\}$ is a redundant family, then there exists $k\in I$ such that $\cap_{i\in I}M_i=\cap_{j\in I\setminus \{k\}}M_j.$ We substitute the original family with the subfamily $\{M_j\mid j\in I\setminus \{k\}\}$ and we conclude by induction.
\end{proof}

\begin{prop}\label{tra1}
	$\mindim(G)\leq \menta(G).$
\end{prop}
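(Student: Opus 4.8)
The plan is to convert a shortest unrefinable chain in $\mathcal{M}(G)$ into a maximal irredundant family of maximal subgroups of no larger size. So fix an unrefinable chain $\frat(G) = H_0 < H_1 < \cdots < H_\ell = G$ in $\mathcal{M}(G)$ with $\ell = \menta(G)$; here $\frat(G)$ is the bottom element of $\mathcal{M}(G)$, being the intersection of all (finitely many) maximal subgroups of $G$, so every unrefinable chain starts there. The idea is that each of the $\ell$ steps of this chain should contribute one maximal subgroup to the family we are building.

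For each $i \in \{1,\dots,\ell\}$ I would extract a single maximal subgroup $M_i$ recording the $i$-th step: since $H_{i-1}$ is a proper maximal intersection it is an intersection of maximal subgroups, and since $H_{i-1} < H_i$ at least one of those maximal subgroups, say $M_i$, does not contain $H_i$. Then $H_{i-1} \le H_i \cap M_i < H_i$; moreover $H_i \cap M_i \in \mathcal{M}(G)$ (it is a finite intersection of maximal subgroups when $i < \ell$, and it is simply $M_i$ when $i = \ell$), so the unrefinability of the chain forces $H_i \cap M_i = H_{i-1}$. Iterating the identities $H_{i-1} = H_i \cap M_i$ from $i = \ell$ down to $i = 1$ yields $\frat(G) = H_0 = M_1 \cap \cdots \cap M_\ell$. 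By Lemma~\ref{sub} I can then pass to a subset $J \subseteq \{1,\dots,\ell\}$ for which $\{M_j \mid j \in J\}$ is irredundant and still has intersection $\frat(G)$. The closing point is that \emph{any} irredundant family of maximal subgroups whose intersection equals $\frat(G)$ is automatically maximal irredundant: one cannot adjoin a further maximal subgroup $M$, because irredundancy of the enlarged family would force the intersection of the remaining members to avoid $M$, whereas that intersection is contained in $\frat(G) \le M$. Hence $\{M_j \mid j \in J\}$ is a maximal irredundant family of size $|J| \le \ell$, giving $\mindim(G) \le \menta(G)$.

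I do not expect a serious obstacle here. The only points requiring care are (i) confirming that $H_i \cap M_i$ really lies in $\mathcal{M}(G)$ at each step — this is precisely what licenses the appeal to the unrefinability of the chain — and (ii) treating the top step $i = \ell$ separately, since $H_\ell = G$ belongs to $\mathcal{M}(G)$ but is not itself a maximal intersection. Both are immediate from the description of $\mathcal{M}(G)$ as $G$ together with all finite intersections of maximal subgroups, and the degenerate cases ($G = 1$, or $\ell = 0$) are trivial.
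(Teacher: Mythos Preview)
Your proof is correct and follows essentially the same route as the paper's: take a shortest unrefinable chain in $\mathcal{M}(G)$, extract one maximal subgroup per step so that their cumulative intersections reproduce the chain, apply Lemma~\ref{sub} to pass to an irredundant subfamily, and observe that a family of maximal subgroups with intersection $\frat(G)$ is automatically maximal irredundant. The only cosmetic difference is that the paper first reduces to $\frat(G)=1$, whereas you work directly with $\frat(G)$; you are also more explicit than the paper about why the $M_i$ can be chosen so that $H_i\cap M_i=H_{i-1}$ (the paper asserts the existence of such $M_i$ without spelling out the unrefinability argument).
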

\begin{proof}
	We may assume $\frat(G)=1.$ Let $t=\menta(G)$ and assume that $$\mathcal C: K_t<K_{t-1}<\dots<K_1<K_0=G$$ is a non refinable chain in $\mathcal M(G).$ There exists $t$ maximal subgroups $M_1,\dots, M_t$ of $G,$ such that $K_i=\cap_{j\leq i}M_j$ for $1 \leq i \leq t.$ Since $\frat(G)=1,$ if $K_t\neq 1,$ then there exists a maximal subgroup $M$ not containing $K_t,$ and consequently $K_t\cap M < K_t <\dots < K_0$ is a refinement of $\mathcal C.$ Hence $K_t=1.$ It follows from Lemma \ref{sub} that there exists $J\subseteq \{1,\dots,t\}$ such that
	$\{M_j\mid j\in J\}$ is an irredundant family of maximal subgroups of $G$ with $\cap_{j\in J}M_j=1.$ The second condition implies that $\{M_j\mid j\in J\}$ is a maximal irredundant family of maximal subgroups of $G,$ hence $\mindim(G)\leq t.$
\end{proof}

\begin{prop}\label{tra2}
	$\maxdim(G)\leq \manta(G).$
\end{prop}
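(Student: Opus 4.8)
The goal is to show $\maxdim(G) \le \manta(G)$, i.e. that the maximal size of an irredundant set of maximal subgroups is bounded by the longest unrefinable chain in $\mathcal M(G)$. The plan is to take an irredundant family realizing $\maxdim(G)$ and use it to build an unrefinable chain in $\mathcal M(G)$ of the same length.

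First I would reduce to the case $\frat(G)=1$, exactly as in the proof of Proposition \ref{tra1}: passing to $G/\frat(G)$ changes neither $\maxdim$ nor $\manta$, since $\mathcal M(G/\frat(G)) \cong \mathcal M(G)/\frat(G)$ as posets and both invariants depend only on this poset. Then I would let $d=\maxdim(G)$ and fix an irredundant family $\mathcal X=\{M_1,\dots,M_d\}$ of maximal subgroups. Set $N_i=M_1\cap\dots\cap M_i$ for $0\le i\le d$ (with $N_0=G$), so that $G=N_0 \ge N_1 \ge \dots \ge N_d$. Irredundancy of $\mathcal X$ guarantees that $N_i \ne N_{i-1}$ for every $i$: if we had $N_i = N_{i-1}$, then $M_i$ would be redundant (dropping it from $\mathcal X$ does not change the intersection, because reordering so $M_i$ is last, $\bigcap_{j\ne i} M_j \supseteq N_{i-1} = N_i = \bigcap_j M_j$, and the reverse inclusion is trivial). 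Hence $G = N_0 > N_1 > \dots > N_d$ is a strictly descending chain of length $d$ in $\mathcal M(G)$, since each $N_i$ is by definition an intersection of maximal subgroups, hence a maximal intersection (and $N_0 = G$ is the top of $\mathcal M(G)$).

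It remains to arrange that this chain is \emph{unrefinable} in $\mathcal M(G)$ — a priori some link $N_i > N_{i+1}$ might admit an intermediate maximal intersection. Here one can proceed greedily: if the chain is refinable, insert a maximal intersection strictly between two consecutive terms and repeat; the chain has finite length and only lengthens, so the process terminates in an unrefinable chain $\mathcal C$ whose length is at least $d$. Since $\manta(G)$ is the maximum length of an unrefinable chain in $\mathcal M(G)$, we get $\manta(G) \ge \mathrm{length}(\mathcal C) \ge d = \maxdim(G)$, as desired.

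The only delicate point is the very last step: an unrefinable chain must run from the bottom of $\mathcal M(G)$ up to $G$, and the bottom of $\mathcal M(G)$ is $\frat(G)=1$ (the intersection of \emph{all} maximal subgroups). So I must check that $N_d$ can be connected down to $1$ by maximal intersections, or rather that \emph{some} unrefinable chain through the $N_i$ exists. This is automatic: any refinement of $G=N_0>\dots>N_d$ inside the finite poset $\mathcal M(G)$ that cannot be refined further is an unrefinable chain of $\mathcal M(G)$ of length $\ge d$, and its minimum is forced to be the minimum $1$ of $\mathcal M(G)$ (otherwise, since $\frat(G)=1$, some maximal subgroup fails to contain that minimum and intersecting with it refines the chain, as in Proposition \ref{tra1}). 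This is the step that genuinely uses $\frat(G)=1$, and it is the main — though modest — obstacle.

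\begin{proof}
	We may assume $\frat(G)=1,$ since passing to $G/\frat(G)$ affects neither $\maxdim$ nor $\manta.$ Let $d=\maxdim(G)$ and let $\mathcal X=\{M_1,\dots,M_d\}$ be an irredundant family of maximal subgroups of $G.$ Put $N_0=G$ and $N_i=M_1\cap\dots\cap M_i$ for $1\le i\le d.$ If $N_i=N_{i-1}$ for some $i,$ then (reordering so that $M_i$ is the last subgroup) $\bigcap_{j\ne i}M_j\supseteq N_{i-1}=N_i=\bigcap_{j}M_j,$ and the reverse inclusion is trivial, so $\mathcal X\setminus\{M_i\}$ has the same intersection as $\mathcal X,$ contradicting irredundancy. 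Hence
	$$G=N_0>N_1>\dots>N_d$$
	is a strictly descending chain in $\mathcal M(G),$ each $N_i$ being an intersection of maximal subgroups and thus a maximal intersection in $G.$ Refine this chain inside the finite poset $\mathcal M(G)$ to an unrefinable chain $\mathcal C,$ which then has length at least $d.$ Its minimum must be $1$: otherwise, since $\frat(G)=1,$ some maximal subgroup $M$ does not contain it, and intersecting the minimum with $M$ refines $\mathcal C,$ a contradiction. Therefore $\mathcal C$ is an unrefinable chain in $\mathcal M(G)$ of length $\ge d,$ whence $\manta(G)\ge d=\maxdim(G).$
\end{proof}
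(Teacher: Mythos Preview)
Your proof is correct and follows essentially the same approach as the paper: build the descending chain $G=N_0>N_1>\dots>N_d$ from an irredundant family of size $d=\maxdim(G)$ and conclude $\manta(G)\ge d$. The paper's version is terser---it simply observes that this is a chain in $\mathcal M(G)$ of length $d$ and stops---whereas you spell out the refinement to an unrefinable chain and verify its bottom element; these extra steps (and the reduction to $\frat(G)=1$) are harmless elaborations rather than a different route.
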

\begin{proof}
	Let $t=\maxdim(G)$ and suppose that $\{M_1,\dots,M_t\}$ is an irredundant family of maximal subgroups of $G$. For $1\leq j\leq t,$ set $K_j=\cap_{i\leq j}M_i:$ $$K_t<K_{t-1}<\dots <K_1<K_0$$ is a chain in $\mathcal M$ of length $t,$ and this implies $\manta(G)\geq t.$
\end{proof}

It is more difficult to compare $\maxdim(G)$ and $\menta(G).$ In the example discussed in the  Section \ref{superss},
$\menta(G)-\maxdim(G)=a.$ On the other hand, if $G=\perm(p)$ with $p$ a prime, we may consider the chain $1<\GL(1,p)<\AGL(1,p)<\perm(p).$ Since $\GL(1,p)$ is maximal in $\AGL(1,p)$ and $\AGL(1,p)$ is maximal in $\perm(p),$ we may refine this chain to a chain of maximal intersections of length at most $2+\log_2(p-1).$ Since $\maxdim(\perm(p))=m(G)=p-1,$ we have examples of finite groups $G$ for which the difference $\maxdim(G)-\menta(G)$ is arbitrarily large.
 We may also construct  finite soluble groups $G$ with  $\maxdim(G)>\menta(G).$
Indeed assume that $p, q$ an $r$ are three primes
and that $p$ divides $r - 1.$
Let $\FF$ be the field with $r$ elements and let $C=\langle c \rangle $ be the subgroup of order $p$ of the multiplicative group of $\FF.$
Let $V=\FF^q$
be a $p$-dimensional vector space over $\FF$ and let $\sigma=(1,2,\dots,q)\in \perm(q).$  The wreath group $H=C\wr \langle \sigma \rangle$ has an irreducible action on $V$ defined as follows:
if $v=(f_1,\dots,f_p)\in V$ and $h=(c_1,\dots,c_p)\sigma \in H$, then $v^h=(f_{1\sigma^{-1}}c_{1\sigma^{-1}},\dots,f_{q\sigma^{-1}}c_{q\sigma^{-1}}).$
We consider the semidirect product
$G_{q,p,r}=V\rtimes H.$
Let $$e_1=(1,0,\dots,0), e_2=(0,1,\dots,0),\dots, e_q=(0,0,\dots,1) \in V,$$ $$h_1=(c,1,\dots,1), h_2=(1,c,\dots,1),\dots, h_q=(1,1,\dots,c) \in C^q\leq H.$$
For any $1\leq i,j \leq q,$ we have
$$h_i^{e_j}=h_i \text { if } i\neq j, \quad h_i^{e_i}=((1/c-1)e_i)h_i.$$
But then, for each $i\in \{1,\dots,q\},$  we have
$$h_i \in \bigcap_{j\neq i}H^{e_j}, \quad h_i \notin H^{e_i},$$
hence $H^{e_1},\dots,H^{e_q}$ is an irredundant family of maximal subgroups of $G_{q,p,r}$ and therefore
 $\maxdim(G_{p,q,r})\geq q.$ Now assume that $p$ has order $q-1$ mod $q:$ in that case $C^q$ is the direct sum of the irreducible $\langle \sigma \rangle$-module, $C_1$ and $C_2$, of dimension, respectively, $1$ and $q-1.$ If we consider $$
Y_0=1 < Y_1=\langle \sigma \rangle< Y_2= C_1 \langle \sigma\rangle < Y_3=H < Y_4=G_{p,r,q},$$
we have that $Y_i$ is a maximal subgroup of $Y_{i+1},$ so $\menta(G)\leq 4.$ Hence  the difference $\maxdim(G)-\menta(G)$ can be arbitrarily large even in the soluble case.

\

The fact that $m(G)\leq \maxdim(G)$ motivates the following question.

\begin{question}\label{quat} Does there exist a finite soluble group $G$ with the property that $m(G)>\menta(G)?$
\end{question}

We are going to prove that the previous question has an affirmative answer if the Fitting length of $G$ is at most 2. But the question remains open in the general case.

\begin{lemma}\label{parag}Let $G$ be a finite nilpotent group and let $\mathcal F$ be a family of subgroups of $G$ which contains $G$ and all the maximal subgroups of $G$ and is closed under taking intersections. Let $\mathcal C= X_t<X_{t-1}<\dots X_1<X_0$ be a chain in $\mathcal F$ . If $\mathcal C$ cannot be refined in $\mathcal F$, then $t\geq u,$ where $u$ is the composition length of $G/\frat(G).$
\end{lemma}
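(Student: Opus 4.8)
Let $G$ be finite nilpotent with Frattini quotient $G/\frat(G)$ of composition length $u$. Since $G$ is nilpotent, $G/\frat(G)$ is a direct product of elementary abelian groups $P_1/\frat(G)\times\dots\times P_k/\frat(G)$, one for each prime divisor, and $u=\sum_i d_i$ where $d_i=\dim P_i/\frat(G)$ over the appropriate prime field. I would first reduce to the case $\frat(G)=1$: the family $\mathcal F$ pushes down to a family $\overline{\mathcal F}$ of subgroups of $\overline G=G/\frat(G)$ (note every subgroup in $\mathcal F$ contains $\frat(G)$, being an intersection of maximal subgroups together with $G$ itself — here I would need the elementary fact that $\frat(G)\le M$ for every maximal $M$, hence $\frat(G)$ lies in every member of $\mathcal F$), and a non-refinable chain in $\mathcal F$ maps to a non-refinable chain in $\overline{\mathcal F}$ of the same length. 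So assume $G=\overline G$ is elementary-abelian-by-parts, and we must show any non-refinable chain $\mathcal C\colon X_t<\dots<X_0$ in $\mathcal F$ has $t\ge u$.

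Because $G$ is a direct product over distinct primes, every subgroup of $G$ — in particular every member of the chain — is the direct product of its Sylow subgroups, and intersections respect this decomposition, so the problem splits as a product over the primes. Concretely, $\mathcal C$ non-refinable in $\mathcal F$ forces, for each prime $p$, the induced chain of $p$-parts to be non-refinable in the corresponding family inside $P_i$; conversely the total length $t$ is the sum of the lengths of the $p$-parts only if the chain descends "one prime at a time", which non-refinability should guarantee after a short argument. Thus it suffices to treat $G$ elementary abelian of rank $d$ over $\FF_p$ (a vector space), $\mathcal F$ a family of subspaces containing $G$ and all hyperplanes and closed under intersection, and show a non-refinable chain has length $\ge d$.

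For the elementary abelian case the heart of the matter is: if $X_{j+1}<X_j$ are consecutive members of the chain with $\dim X_j-\dim X_{j+1}\ge 2$, I must find a member $W\in\mathcal F$ strictly between them, contradicting non-refinability; hence every step drops dimension by exactly $1$, and since $X_0=G$ has dimension $d$ and $X_t$ is the minimal member (the intersection of all hyperplanes in $\mathcal F$, which has some dimension $\ge 0$), we get $t\ge d - \dim X_t \le d$... more precisely $t = \dim X_0 - \dim X_t$ and I need $t\ge u$, so I should observe that a non-refinable chain must actually reach the bottom, i.e. $X_t$ must be the intersection of all the hyperplanes lying in $\mathcal F$, and that this bottom has dimension $d-u$ (this is where $u$ = composition length of $G/\frat(G)$ enters: $\frat$ of the vector space is $0$, so $u=d$, and the intersection of all maximal subgroups is $\frat(G)=0$). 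To produce the intermediate $W$ when a step drops by $\ge 2$: write $X_j = X_{j+1}\oplus \langle v_1, v_2\rangle\oplus\dots$; since $X_{j+1}$ is an intersection of hyperplanes in $\mathcal F$ but $X_j$ is larger, at least one hyperplane $H\in\mathcal F$ contains $X_{j+1}$ and not $X_j$, and then $W:=X_j\cap H\in\mathcal F$ satisfies $X_{j+1}\le W < X_j$; the only thing to check is $W\ne X_{j+1}$, which follows because $X_j\cap H$ has codimension $1$ in $X_j$ while $X_{j+1}$ has codimension $\ge 2$.

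**Main obstacle.** The genuinely delicate point is the reduction across primes in the second paragraph: one must argue carefully that a chain in $\mathcal F$ which is non-refinable \emph{in $\mathcal F$} forces each prime-component chain to be non-refinable in its own (intersection-closed, maximal-containing) subfamily, \emph{and} that a non-refinable chain cannot "waste" a step by shrinking two primes at once — i.e. that between $X_{j+1}$ and $X_j$ only one Sylow component changes. This last claim is exactly the analogue, at the level of the family $\mathcal F$, of the step-by-$1$ argument: if two components both dropped, one could interpose the subgroup agreeing with $X_j$ in one component and with $X_{j+1}$ in the other, provided that interposed subgroup lies in $\mathcal F$ — which it does, since it is again an intersection of the relevant hyperplanes from $\mathcal F$. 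Once that is nailed down, additivity of length and codimension over the primes gives $t\ge \sum_i d_i = u$.
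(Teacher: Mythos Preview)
Your reduction to $\frat(G)=1$ contains a genuine error. You assert that every member of $\mathcal F$ contains $\frat(G)$ ``being an intersection of maximal subgroups together with $G$ itself''. But the hypothesis only says that $\mathcal F$ \emph{contains} $G$ and the maximal subgroups and is closed under intersection; it does not say $\mathcal F$ is the smallest such family. In the very application for which the lemma is proved (the theorem immediately following it), $\mathcal F$ is built from the maximal subgroups of $H$ \emph{together with} centralizers $C_H(v)$ coming from the module, and there is no reason these centralizers contain $\frat(H)$. So in general the $X_i$ need not contain $\frat(G)$, the map $X\mapsto X\frat(G)/\frat(G)$ can collapse steps of the chain, and your ``non-refinable chain in $\mathcal F$ maps to a non-refinable chain in $\overline{\mathcal F}$ of the same length'' fails.

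The paper's proof repairs exactly this point, and otherwise uses the same core idea you found. It does not assume $X_i\geq\frat(G)$; instead it first shows $X_0=G$ and $X_t\leq\frat(G)$ (the latter by your own ``reach the bottom'' argument), then passes to the images $\overline{X_i}=X_i\frat(G)/\frat(G)$, accepting that some consecutive images may coincide. Whenever $\overline{X_i}\neq\overline{X_{i+1}}$ one finds, as you do, a maximal subgroup $M$ with $X_{i+1}\leq M$ and $X_i\not\leq M$; then $X_{i+1}\leq X_i\cap M<X_i$ and non-refinability forces $X_{i+1}=X_i\cap M$, so $|X_i:X_{i+1}|=|G:M|$ is prime. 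Hence the strict part of the image chain runs from $G/\frat(G)$ to $1$ with prime-index steps and has length exactly $u$, giving $t\geq u$. Note also that your detour through a prime-by-prime decomposition is unnecessary: working directly in $G/\frat(G)$ handles all primes at once, and avoids the delicate ``two primes cannot drop simultaneously'' step you flagged as the main obstacle.
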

\begin{proof}
Clearly, since $\mathcal C$ is not refinable, we must have $X_0=G.$ Let $F=\frat(G).$ If $X_t\not\leq F,$ then there exists a maximal subgroup $M$ of $G$ not containing $X_t$. But then $X_t\cap M < X_t$. However $X_t\cap M\in \mathcal F,$ and this would contradict the assumption that $\mathcal C$ cannot be refined in $\mathcal F.$ For any $H\leq G,$ let $\overline H:=HF/F.$ We have a chain $$\overline {\mathcal C}: \ F=\overline X_t\leq \dots \leq \overline X_0.$$ Assume that $\overline X_i\neq \overline X_{i+1}.$ This implies that there exists a maximal subgroup $M$ of $G$ containing $X_{i+1}$ but not $X_i$. We have $X_{i+1}\leq X_i\cap M<X_i.$ Since $X_i\cap M\in \mathcal F$ and  $\mathcal C$ cannot be refined in $\mathcal F$, we deduce $X_{i+1}=X_i\cap M.$ But then $X_i/X_{i+1}=X_i/(X_i\cap M)\cong X_iM/M=G/M,$ hence $X_{i+1}$ is a maximal subgroup of $X_i$ (and therefore $\overline X_{i+1}$ is a maximal subgroup of $\overline X_i).$ This implies that the length of $\overline {\mathcal C}$ is precisely $u,$ hence $t\geq u.$
\end{proof}

\begin{thm}\label{fit2}
Let $G$ be a finite group. If $G/\fit(G)$ is nilpotent, then $$\menta(G)\geq m(G).$$
\end{thm}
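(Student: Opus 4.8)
The plan is to prove this by a reduction to the nilpotent situation, using $F:=\fit(G)$ as the bridge. We want to produce an unrefinable chain in $\MM(G)$ from $1$ (or $\frat(G)$) up to $G$ whose length is at least $m(G)$. Since $G/F$ is nilpotent, we have good control on the maximal subgroups of $G$ that contain $F$, via the natural correspondence with maximal subgroups of $G/F$, and Lemma \ref{parag} applied to $G/F$ will bound from below the length of any unrefinable chain among the maximal intersections lying above $F$. So the strategy is: take an irredundant generating set $\{g_1,\dots,g_{m(G)}\}$ of $G$ witnessing $m(G)$, form the descending chain of the partial ``generated-by-cogenerators'' subgroups, convert it into a chain of maximal intersections, and then analyze separately the part of the chain below $F$ and the part at or above $F$.

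First I would reduce to $\frat(G)=1$, as is done throughout the paper; note $m(G)$, $\menta(G)$ and the structure of $\MM(G)$ are unaffected modulo the Frattini subgroup. Next, start from a maximal irredundant set $\{M_1,\dots,M_t\}$ of maximal subgroups coming from an irredundant generating set of size $m(G)$: here $t=m(G)$ and $\bigcap_i M_i=\frat(G)=1$ (this is the standard translation giving $m(G)\le\maxdim(G)$, where $M_i$ is a maximal subgroup omitting $g_i$ but containing the others). The chain $K_j:=M_1\cap\dots\cap M_j$ for $j=0,\dots,t$ is then a strictly descending chain in $\MM(G)$ of length $t=m(G)$ from $G$ down to $1$. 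If this chain happens to be unrefinable in $\MM(G)$ we are done immediately; in general it need not be, and the issue is to show that refining it cannot shorten the relevant count — but of course refining only lengthens a chain, so in fact \emph{any} chain of length $t$ in $\MM(G)$ already gives $\manta(G)\ge t$. The subtlety is that we need $\menta(G)\ge t$, i.e.\ a lower bound on the length of the \emph{shortest} unrefinable chain, which is genuinely harder.

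So the real argument must be: show every unrefinable chain in $\MM(G)$ has length $\ge m(G)$. I would argue by contradiction, taking an unrefinable chain $\CC: 1=X_s<X_{s-1}<\dots<X_0=G$ with $s$ minimal, and splitting it at $F=\fit(G)$. Let $i$ be the largest index with $X_i\le F$ (this makes sense as $X_s=1\le F$). The top portion $X_i<X_{i-1}<\dots<X_0=G$ projects to an unrefinable chain in $\MM(G/F)$ from $G/F$ down to $X_iF/F$; here I would invoke Lemma \ref{parag} (with $\mathcal F=\MM(G/F)$, legitimate since $G/F$ is nilpotent) to get that this portion has length at least the composition length $u$ of $(G/F)/\frat(G/F)$, \emph{provided} $X_iF/F$ can be taken to be the whole of $\frat(G/F)$ — which unrefinability of $\CC$ together with $\frat(G)=1$ should force, since a maximal subgroup of $G$ not containing some cogenerator could be intersected in. For the bottom portion $1=X_s<\dots<X_i\le F$, I would use that $F$ is nilpotent and that maximal intersections of $G$ lying inside $F$ are in particular intersections of maximal subgroups, to bound its length below by roughly the number of prime factors needed to generate $F$, again leaning on Lemma \ref{parag}-type reasoning applied inside the nilpotent group $F$ (or its relevant sections). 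Finally, I would combine the two bounds: $s \ge u + (\text{bottom length}) \ge m(G)$, using the standard fact that for a group $G$ with $\frat(G)=1$ and $G/\fit(G)$ nilpotent, $m(G)$ is controlled by $m(G/F)$ plus a contribution from the chief factors inside $F$ — the inequality $m(G)\le m(G/F)+(\text{number of non-Frattini chief factors below }F)$, or a sharper module-theoretic count of the $G/F$-composition length of $F$. I expect the main obstacle to be exactly this last bookkeeping step: relating $m(G)$ precisely enough to the two chain-length lower bounds, in particular handling the interaction between the $F$-part and the $G/F$-part of an irredundant generating set (Gaschütz-type arguments on complemented chief factors), and making sure the ``split at $F$'' of an unrefinable chain really does yield unrefinable chains in each piece so that Lemma \ref{parag} applies cleanly on both sides.
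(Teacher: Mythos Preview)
Your overall target --- show every unrefinable chain in $\MM(G)$ has length at least $m(G)$ --- is the right one, and the reduction to $\frat(G)=1$ is fine. But the ``split the chain at $F=\fit(G)$'' strategy has a genuine gap, not just bookkeeping. The steps of an unrefinable chain where ``the module part drops'' and where ``the quotient part drops'' are \emph{interleaved}, not segregated into a bottom block and a top block. A step $X_{i+1}<X_i$ with $X_{i+1}F=X_iF$ (so it collapses under your projection to $G/F$) can occur anywhere in the chain, with neither term contained in $F$; such steps are lost by your projection and are not recovered by your bottom count, which only sees the tail lying inside $F$. Your hoped-for application of Lemma~\ref{parag} inside the nilpotent group $F$ also fails outright: the maximal intersections of $G$ lying in $F$ need not include the maximal subgroups of $F$ (indeed $F$ itself need not belong to $\MM(G)$; e.g.\ in $C_5\rtimes C_4$ with faithful action, $F=C_5$ is not a maximal intersection), so the hypothesis of that lemma is not available. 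On the top side, the projected chain in $G/F$ need not be unrefinable in any family to which Lemma~\ref{parag} applies, and the images $X_iF/F$ need not even lie in $\MM(G/F)$.

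The paper's proof avoids splitting the chain. After writing $G=W\rtimes H$ with $H$ nilpotent (take $W$ a normal complement of $Z(G)$ in $\fit(G)$ and $H=Z(G)X$ for a complement $X$ of $\fit(G)$, so $W\cong V_1^{\delta_1}\times\cdots\times V_t^{\delta_t}$ as an $H$-module), it invokes the structural result \cite[Theorem~15]{ik}: every maximal intersection has the shape $U\,Z^{w}$ with $U$ an $H$-submodule of $W$ and $Z$ lying in the family $\mathcal F$ generated under intersection by the maximal subgroups of $H$ together with the point-stabilizers $C_H(v)$, $v\in V_i$. Along an unrefinable chain, each step drops exactly one coordinate: either $U$ loses a single $H$-composition factor of $W$, or $Z$ drops one $\mathcal F$-step (unrefinability in $\MM(G)$ forces $\mathcal F$-maximality here). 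The $U$-drops therefore number exactly $\sum_i\delta_i$, while Lemma~\ref{parag} --- applied once, to the nilpotent group $H$ with the enlarged family $\mathcal F$, not to $G/F$ and not to $F$ --- forces at least $u$ many $Z$-drops, $u$ being the composition length of $H/\frat(H)$. Since $m(G)=\sum_i\delta_i+u$ by \cite{mg1}, the bound follows. The essential idea you are missing is to decompose each \emph{term} of the chain into two coordinates and count drops coordinate-wise, so that interleaving is handled automatically; a positional split of the chain cannot achieve this.
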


\begin{proof}
We may assume $\frat(G)=1.$ In this case $\fit(G)=\soc(G)$ has a complement, say $X$, in $G$ and $Z(G)$ has a complement, say $W$, in $\fit G$, which is normal in $G$. Let $H=Z(G)X.$ We have$$G=W \rtimes H=\left(V_1^{\delta_1}\times \dots \times V_t^{\delta_t}\right)\rtimes H,$$ where $V_1,\dots,V_t$ are faithful irreducible $H$-modules, pairwise not $H$-isomorphic.
By \cite[Theorem 1]{mg1}, $m(G)$ is the number of non-Frattini factors in a chief series of $G.$  
 Hence $m(G)=\delta_1+\dots+\delta_t+u,$ where  $u$ be the composition length of $H/\frat(H).$
 Let $\mathcal M$ be the family of the maximal subgroups of $H$ and let $$\mathcal D=
\{C_H(v)\mid v\in V_i, 1\leq i \leq t\}.$$ Consider the family $\mathcal F$ consisting of $H$ and all  the possible intersections of elements of $\mathcal D$ and $\mathcal M.$

Now let $1=Y_\rho<\dots < Y_0=G$ be a chain of maximal intersections in $G$ that cannot be refined. By an iterated application of \cite[Theorem 15]{ik}, there exists $w\in W$ such that 
for $1\leq i\leq \rho$, we have $Y_i=U_iZ_i^w,$ with $U_i\leq_H W$ and $Z_i\in \mathcal F.$ Moreover either $Z_i=Z_{i+1}$ and $U_i/U_{i+1}\cong_H V_j$ for some $1\leq j\leq t$ or $U_i=U_{i+1}$ and $Z_{i+1}=Z_i\cap X$ with $X\in \mathcal D \cup \mathcal M.$ In the second case, the fact that there is no maximal intersection in $G$ strictly between $Y_{i+1}$ and $Y_i$ implies that $Z_{i+1}$ is $\mathcal F$-maximal in $Z_i.$ Let $J:=\{j\mid U_{j+1}<U_{j}\}$. Assume $|J|=a$ and order the elements of $J$ so that $j_1<j_2<\dots <j_a$: we have that $0<U_{j_a}<U_{j_{a-1}}\dots <U_{j_2}<U_{j_1}=W$ is an $H$-composition series of $W$ and this implies $a=\delta_1+\dots+\delta_t.$ Now let $J^*:=\{j\mid U_{j+1}=U_{j}\}$. Assume $|J^*|=b$ and order the elements of $J$ so that $j_1<j_2<\dots <j_b$: we have that $1<U_{j_b}<U_{Z_{b-1}}\dots <Z_{j_2}<Z_{j_1}=H$ is an $\mathcal F$-chain that cannot be refined, so by Lemma \ref{parag}, $b\geq u$. We conclude $\rho=a+b\geq \delta_1+\dots+\delta_t+u=m(G).$
\end{proof}

\noindent {\bf{Remark}.} One could hope to generalize Lemma \ref{parag} as follows: let $G$ be a finite soluble group and let $\mathcal F$ be a family of subgroups of $G$ which contains $G$ and all the maximal subgroups of $G$ and is closed under taking intersections. Let $\mathcal C= X_t<X_{t-1}<\dots X_1<X_0$ be a chain in $\mathcal F$. If $\mathcal C$ cannot be refined in $\mathcal F$, then $t\geq m(G).$ This would allows to prove Theorem \ref{fit2} for arbitrary finite soluble groups. However this more general statement is false. Let $G=H_1\times H_2\times H_3,$ with $H_i\cong\perm(3)$ and let $K_i$ be the Sylow 3-subgroup of $H_i.$ Let $1\neq k_i\in K_i$ and let $X=\langle k_1,k_2,k_3\rangle.$ Let $\mathcal F$ be the family of subgroups of $G$ consisting of $G,$ the maximal intersections in $G$ and $X.$ A maximal subgroup of $G$ containing $X$, contains also $K:=K_1\times K_2\times K_3,$ so $1<X<K<K_1\times K_2\times H_3<K_1\times H_2\times H_3=G$ in a non-refinable $\mathcal F$-chain in $G$. However $m(G)=6.$

\section{Strongly and weakly minmax finite groups}

\begin{lemma}\label{quofra}
	Let $N$ be a normal subgroup of a strongly minmax finite group. If $N$ in an intersection of maximal subgroups of $G$ and $G$ is weakly minmax, then $G/N$ is weakly minmax.
\end{lemma}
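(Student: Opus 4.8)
The plan is to realise $\mathcal M(G/N)$ as an interval of $\mathcal M(G)$ and then play off the minimal and maximal lengths of unrefinable chains in the two intervals into which $N$ splits $\mathcal M(G)$. First I would do the bookkeeping reduction. Since $N$ is an intersection of maximal subgroups of $G$, every $H\in\mathcal M(G)$ with $N\le H$ equals the intersection of the maximal subgroups of $G$ lying above it: writing $H=\bigcap_{M\in\mathcal X}M$ one has $\bigcap\{M\text{ maximal}:H\le M\}\subseteq\bigcap_{M\in\mathcal X}M=H\subseteq\bigcap\{M\text{ maximal}:H\le M\}$, and each such maximal subgroup contains $N$. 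Hence $H\mapsto H/N$ is an isomorphism of posets from the interval $[N,G]$ of $\mathcal M(G)$ onto $\mathcal M(G/N)$; in particular $\frat(G/N)=N/N=1$, the quantities $\menta(G/N)$ and $\manta(G/N)$ are the minimal and maximal lengths of an unrefinable chain of $\mathcal M(G)$ from $N$ up to $G$, and $\alpha(G/N)$ is the least size of a family $\mathcal Y$ of maximal subgroups of $G$, all containing $N$, with $\bigcap_{M\in\mathcal Y}M=N$.

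Next I would use the fact that $N\in\mathcal M(G)$. Let $s_{\min}$ and $s_{\max}$ be, respectively, the minimal and maximal lengths of an unrefinable chain of $\mathcal M(G)$ from $\frat(G)$ to $N$. The key structural observation is that concatenating an unrefinable chain from $\frat(G)$ to $N$ with an unrefinable chain from $N$ to $G$ produces an unrefinable chain of $\mathcal M(G)$ (no term can be inserted inside either piece, nor at the junction $N$, since both pieces are already unrefinable there). Taking the two longest pieces and the two shortest pieces gives
\[
\manta(G)\ \ge\ s_{\max}+\manta(G/N),\qquad \menta(G)\ \le\ s_{\min}+\menta(G/N),
\]
the second inequality because $\menta(G)$ is a minimum over \emph{all} unrefinable chains of $\mathcal M(G)$. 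Since $G$ is weakly minmax, $\menta(G)=\manta(G)=\alpha(G)=:d$, so
\[
\manta(G/N)\ \le\ d-s_{\max}\ \le\ d-s_{\min}\ \le\ \menta(G/N)\ \le\ \manta(G/N),
\]
and, the first and last terms being equal, every inequality is forced to be an equality. Thus $s_{\min}=s_{\max}=:s$ — every unrefinable chain of $\mathcal M(G)$ from $\frat(G)$ to $N$ has the same length $s$ — and already $\menta(G/N)=\manta(G/N)=d-s$.

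It then remains to compute $\alpha(G/N)$. From the general inequality $\alpha\le\manta$ applied to $G/N$ we get $\alpha(G/N)\le d-s$. For the reverse, fix an unrefinable chain $\frat(G)=X_s<\dots<X_1<X_0=N$ of $\mathcal M(G)$, of length exactly $s$ by the previous step; the usual covering argument (as used repeatedly in the paper) shows that each step is cut out by a single maximal subgroup, i.e.\ $X_i=X_{i-1}\cap M_i$ with $M_i$ maximal, so that $\frat(G)=N\cap M_1\cap\dots\cap M_s$. Taking in addition a family $\mathcal Y$ of $\alpha(G/N)$ maximal subgroups of $G$, each containing $N$, with $\bigcap_{M\in\mathcal Y}M=N$, we obtain
\[
\frat(G)=\Bigl(\bigcap_{M\in\mathcal Y}M\Bigr)\cap M_1\cap\dots\cap M_s,
\]
an intersection of at most $\alpha(G/N)+s$ maximal subgroups of $G$; hence $d=\alpha(G)\le\alpha(G/N)+s$, i.e.\ $\alpha(G/N)\ge d-s$. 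Combining everything, $\menta(G/N)=\manta(G/N)=\alpha(G/N)=d-s$, so $G/N$ is weakly minmax. (Note that the hypothesis as stated includes $G$ strongly minmax, which by Theorem~\ref{facile} already implies $G$ weakly minmax, so only weak minmax is used.)

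I expect the middle step to be the crux. One has to set up a lower bound for $\manta(G)$ and an upper bound for $\menta(G)$ \emph{through the common point $N$} at the same time, and then recognise that weak minmax does more than equalise the invariants of $G/N$: it forces the auxiliary interval $[\frat(G),N]$ of $\mathcal M(G)$ to have all its maximal chains of one common length $s$, and this graded‑interval fact — which is not among the hypotheses — is exactly what feeds the $\alpha$‑estimate of the last step. The reduction in the first step is routine, but it does rest on the ``$H$ is the intersection of the maximal subgroups above $H$'' observation, and it is also there that the hypothesis ``$N$ is an intersection of maximal subgroups'' is spent, yielding $\frat(G/N)=1$; without it the whole scheme collapses.
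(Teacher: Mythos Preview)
Your proof is correct. The first half---identifying $\mathcal M(G/N)$ with the interval $[N,G]$ of $\mathcal M(G)$ and concatenating unrefinable chains below and above $N$ to force $\menta(G/N)=\manta(G/N)$---is essentially the paper's argument, though you phrase it through the four-term inequality chain and thereby also extract the auxiliary fact $s_{\min}=s_{\max}=:s$ (every unrefinable chain in $[\frat(G),N]$ has the same length), which the paper does not isolate.

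The treatment of $\alpha(G/N)$ is where the two proofs genuinely diverge. The paper takes a minimal family $X_1,\dots,X_a$ with $\bigcap X_i=N$, extends it to a maximal irredundant family $X_1,\dots,X_a,Z_1,\dots,Z_c$, asserts that the resulting descending chain of length $a+c$ is unrefinable (hence $a+c=\manta(G)$), and then compares with a second chain of length $b+c$ built from the $Z_j$ below $N$ and an unrefinable chain $Y_1,\dots,Y_b$ above $N$. Your route is different: you use the equality $s_{\min}=s_{\max}=s$ and the covering observation (each step of an unrefinable $\mathcal M(G)$-chain is cut by a single maximal subgroup) to write $\frat(G)=N\cap M_1\cap\dots\cap M_s$, then combine with an $\alpha(G/N)$-family to get $\alpha(G)\le\alpha(G/N)+s$, hence $\alpha(G/N)\ge d-s$. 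This is more self-contained: it avoids the maximal-irredundant extension entirely and sidesteps the paper's assertion that the $a{+}c$ chain is unrefinable---an assertion that is not immediate, since a maximal irredundant family may have intersection strictly above $\frat(G)$ (cf.\ the negative answer to Question~\ref{quodue}), so extendability at the bottom is not a priori ruled out. Your closing remark that only the weakly-minmax hypothesis is used is also correct.
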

\begin{proof}  Let $\mathcal M^+(G,N)$ (resp. $\mathcal M^-(G,N)$ be the sublattice of $\mathcal M(G)$ consisting of the subgroups in $\mathcal M(G)$ containing $N$ (resp. contained in $N$). Let $N=H_0\leq \dots \leq H_t=G$ and 
	$N=K_0\leq \dots \leq K_u=G$	be two unrefinable chains in $\mathcal M^+(G,N)$. If $T_0\leq \dots \leq T_v=N$ is an unrefinable chain in $\mathcal M^-(G,N),$ then
	$$1=T_0\leq \dots \leq T_v=H_0\leq \dots \leq H_t=G$$ and
	$$1=T_0\leq \dots \leq T_v=K_0\leq \dots \leq K_u=G$$
	are two unrefinable chains in $\mathcal M(G)$, so, since $G$ is weakly minmax, they have the same length, but then $t=u$ and therefore $\menta(G/N)=\manta(G/N).$

	Now assume that $X_1,\dots,X_a$ is a family of maximal subgroups of $G$ with minimal size with respect to the property $X_1\cap \dots \cap X_a=N$ and that $Y_1,\dots,Y_b$ is a family of maximal subgroups of $G$ containing $N$ and  with the property that the chain $$\mathcal C: Y_1\cap \dots \cap Y_b < Y_1\cap \dots \cap Y_{b-1}<\dots <Y_1\cap Y_2 < Y_1<G$$  cannot be refined in $\mathcal M^+(G,N).$ We want to prove that $a=b.$ First, notice that, since $N$ is a maximal intersection and $\mathcal C$ is not refinable, $Y_1\cap \dots \cap Y_b=N.$ There exists $Z_1,\dots,Z_c$ such that $X_1,\dots,X_a,Z_1,\dots,Z_c$ is a maximal irredundant family of maximal subgroups of $G$. 
	Consider the chain of maximal intersections: 
	$$\begin{aligned} \ & X_1\cap \dots \cap X_a\cap Z_1\cap\dots \cap Z_c < X_1\cap \dots \cap X_a\cap Z_1\cap\dots \cap Z_{c-1}< \\ &<\dots <  X_1\cap \dots \cap X_a \cap Z_1 <  X_1\cap \dots \cap X_a < \dots < X_1<G.
	\end{aligned}$$
	Since $G$ is weakly minmax, this chain cannot be refined (and in particular $a+c=\menta(G)=\manta(G)$). On the other hand
	$$\begin{aligned}\ & N\cap Z_1\cap\dots \cap Z_c < N\cap Z_1\cap\dots \cap Z_{c-1}< \\ &<\dots <  N\cap Z_1 <  N= Y_1\cap \dots \cap Y_b < \dots < Y_1<G
	\end{aligned}$$
	is a chain of maximal intersections and we must have $c+b\leq \manta(G)=a+c,$ hence $b\leq a$ (and consequently $b=a$).
\end{proof}

\begin{thm}\label{unswm}
	A finite weakly minmax group is soluble.
\end{thm}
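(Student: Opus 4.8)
The plan is to argue by contradiction, taking a counterexample $G$ of minimal order, and to show that $G$ must in fact be an almost simple group, which we then rule out using the result from \cite{bgl} that no non-abelian finite simple group is minmax (together with the fact, recorded after the definition, that weakly minmax implies minmax-type equalities via Theorem \ref{facile}). First I would reduce to the case $\frat(G)=1$: if $\frat(G)\neq 1$, then $\frat(G)$ is an intersection of maximal subgroups, so the quotient-closure machinery — specifically Lemma \ref{quofra} applied with $N=\frat(G)$, once one checks a weakly minmax group with $\frat(G)=1$ is strongly minmax in the relevant sense (this is where the hypothesis ``derived length $\le 3$'' of the main theorem is not yet needed, only solubility) — forces $G/\frat(G)$ to be weakly minmax, contradicting minimality unless $\frat(G)=1$. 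So assume $\frat(G)=1$.

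Next I would analyze a minimal normal subgroup $N$ of $G$. Since $\frat(G)=1$, $N$ has a complement, and $N=\soc$-part is a direct product of isomorphic simple groups. The key dichotomy is whether $N$ is abelian. If every minimal normal subgroup of $G$ is abelian, then $\fit(G)\neq 1$; I would like to push towards showing $G$ is soluble directly, perhaps by induction on chief length using Lemma \ref{quofra} to peel off $N$ (note $N$, being a minimal normal subgroup with $\frat(G)=1$, is an intersection of maximal subgroups), getting that $G/N$ is weakly minmax, hence soluble by minimality, hence $G$ soluble — contradiction. So the real work is the case where $G$ has a non-abelian minimal normal subgroup $N = S_1 \times \cdots \times S_k$ with $S_i \cong S$ simple non-abelian. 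Here I would first handle $k=1$, $N=S$: the main obstacle is to extract from the weakly minmax hypothesis enough structure on chains and on $\alpha(G)$ to contradict the failure of the minmax property for $S$ (and for $\aut(S)$-overgroups); the cleanest route is probably to combine an unrefinable chain inside $\mathcal M^-(G,N)$ — which relates to $\mathcal M(N)$ or to $\mathcal M(C)$ for a suitable section — with an unrefinable chain through $N$, exploiting that $\menta(G)=\manta(G)$ forces all such concatenations to have equal length, while the non-minmax result for simple groups gives two unrefinable chains of different lengths inside the simple part, a contradiction. For $k>1$, one expects a similar contradiction, amplified by the several factors, possibly after using that $G$ permutes the $S_i$ and restricting to the kernel of that action.

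The step I expect to be the main obstacle is precisely the non-abelian minimal normal subgroup case: translating ``$G$ weakly minmax'' into a usable statement about unrefinable chains of maximal intersections that pass through a composition factor $S$, and then quoting \cite{bgl} in a way that genuinely bites. In particular one must be careful that a maximal intersection chain of $G$ restricts to something controlled inside $S_1\times\cdots\times S_k$ — the intersections of maximal subgroups of $G$ need not restrict to intersections of maximal subgroups of $N$ — so I would likely need an auxiliary lemma (in the spirit of \cite[Theorem 15]{ik} used in the proof of Theorem \ref{fit2}) describing maximal intersections of $G$ in terms of the action on $N$ and maximal intersections of a complement. Once that structural description is in hand, the length-counting contradiction should follow in the same style as the proofs of Propositions \ref{tra1}, \ref{tra2} and Lemma \ref{quofra}.
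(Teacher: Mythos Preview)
Your proposal has a genuine gap at the heart of the non-abelian step. You plan to derive a contradiction from the \cite{bgl} result that non-abelian simple groups are not minmax, by finding ``two unrefinable chains of different lengths inside the simple part''. But \cite{bgl} shows $\mindim(S)\neq\maxdim(S)$, not $\menta(S)\neq\manta(S)$, and these are different: the paper explicitly records that $\menta(\alt(5))=\manta(\alt(5))=3$, so $\menta=\manta$ alone does \emph{not} force solubility. The extra content of ``weakly minmax'' is the equality $\alpha(G)=\manta(G)$, and your outline never invokes $\alpha(G)$ at all. Without it there is no contradiction to reach. A second issue: the auxiliary lemma you hope to import from \cite{ik} describes maximal intersections only in the \emph{soluble} setting, so it will not give you a structural description of maximal intersections of a group with non-abelian socle.

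The paper's argument is quite different and uses $\alpha$ essentially. One first reduces (via Lemma~\ref{quofra}, applied to $C_G(H/K)$ for a non-abelian chief factor $H/K$) to a monolithic primitive group $G$ with socle $N\cong S^n$ and embeds $G$ in $X\wr T$ with $X$ almost simple. Two numerical invariants of $X$ are introduced: $\sigma(X)$, the maximal length of a descending chain of intersections of $S$-conjugates of a core-free maximal subgroup (Lemma~\ref{frob} gives $\sigma(X)\ge 3$), and $\tau(X)$, the minimal number of core-free maximal subgroups with trivial intersection (Lemma~\ref{tau}, from \cite{bgl2}, gives $\tau(X)\le 4$). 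Building a long chain from conjugates of a product-type maximal subgroup yields $\manta(G)\ge t+n\cdot\sigma(X)$, while a short covering family yields $\alpha(G)\le t+\tau(X)+n-1$; the weakly minmax equality $\alpha(G)=\manta(G)$ then forces $n(\sigma(X)-1)<\tau(X)$, hence $2n<4$ and $n=1$. The almost simple case is finished by a case analysis using base sizes (for sporadic and alternating socles) and chains through parabolic subgroups down to a Borel (for Lie type), again comparing $\alpha$ with $\manta$.
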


Before to prove this theorem, we need to introduce a couple of definitions and related lemmas.

\begin{defn}
	Let $X$ be an almost simple group and $S=\soc X.$
	\begin{enumerate}
		\item We define $\sigma(X)$ as the largest positive integer $\sigma$ for which there exists a core-free maximal subgroup $Y$ of $X$ and $s_1,s_2,\dots,s_\sigma$ in $S$ such that $$Y^{s_1}\cap S > Y^{s_1}\cap Y^{s_2}\cap S > \dots > Y^{s_1}\cap Y^{s_2}\cap \dots \cap Y^{s_\sigma}\cap S.$$
		\item We define $\tau(X)$ as the minimal size of a family of core-free maximal subgroups of $X$ with trivial intersection.
	\end{enumerate}
\end{defn}

\begin{lemma}\label{frob}
	$\sigma(X)\geq 3.$
\end{lemma}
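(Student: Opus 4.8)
The plan is to exhibit, for each almost simple group $X$ with socle $S$, a core-free maximal subgroup $Y$ and three elements $s_1, s_2, s_3 \in S$ producing a strictly descending chain $Y^{s_1}\cap S > Y^{s_1}\cap Y^{s_2}\cap S > Y^{s_1}\cap Y^{s_2}\cap Y^{s_3}\cap S$. Since the invariant $\sigma(X)$ only sees what happens inside $S$, and since intersecting with $S$ before or after intersecting the $Y^{s_i}$ makes no difference once we pass to $S$, it is cleanest to work with $H := Y \cap S$, a core-free subgroup of $S$, and to ask for $s_1,s_2,s_3$ with $H^{s_1} > H^{s_1}\cap H^{s_2} > H^{s_1}\cap H^{s_2}\cap H^{s_3}$; conjugating by $s_1^{-1}$ we may as well take $s_1 = 1$ and seek $H > H \cap H^{a} > H \cap H^{a} \cap H^{b}$ for suitable $a,b \in S$. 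The first inequality is automatic as soon as $H$ is not normal in $S$ and we pick $a$ with $H^a \neq H$ (it holds generically: if $H$ is self-normalizing, any $a \notin N_S(H) = H$ works, and more care is needed only if $N_S(H) > H$). The content is therefore the \emph{second} strict inequality: we must find a third conjugate $H^b$ that properly cuts down the already-small subgroup $H \cap H^a$.

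The main step is a counting / action argument. Fix $a$ so that $D := H \cap H^a < H$ is a proper subgroup, and consider the natural action of $H$ (or of a slightly larger group) on the coset space $S/H$, equivalently on the set $\Omega$ of $S$-conjugates of $H$. We want a conjugate $H^b$ with $D \cap H^b < D$, i.e.\ with $H^b \not\supseteq D$. Suppose for contradiction that every conjugate of $H$ contains $D$; then $D$ lies in the intersection of all conjugates of $H$, which is $\core_S(H) = 1$ since $H$ is core-free, forcing $D = 1$. So if $D \neq 1$ we are immediately done: some $H^b$ fails to contain $D$, and the chain $H > D > D \cap H^b$ is strictly decreasing of length $2$, giving $\sigma(X) \geq 3$. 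Hence the only groups requiring separate treatment are those where for \emph{every} choice of $a$ with $H^a \neq H$ one has $H \cap H^a = 1$ — that is, where the core-free maximal subgroup $H$ meets all its distinct conjugates trivially.

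That residual case is exactly the situation of a Frobenius-like action: $H$ acting on $(S/H) \setminus \{H\}$ with all point stabilizers trivial, so $H$ is a ``Frobenius complement'' for this action (this is the reason for the lemma's name, \ref{frob}). The plan here is to use the known classification-based facts about which almost simple groups have a core-free maximal subgroup $H$ that is a Frobenius complement in their doubly-transitive-type or primitive action: these are very restricted — essentially $S = \PSL(2,q)$ with $H$ a Borel or dihedral subgroup, $S = \mathrm{Sz}(q)$, $S = {}^2G_2(q)$, and a short list of sporadic small cases such as $\alt(5), \alt(6), \PSL(2,7), \PSL(2,8), \PSL(2,11)$, etc. For each of these one has explicit subgroup structure, and one simply verifies directly that there exists \emph{another} core-free maximal subgroup $Y'$ (of a different type) for which the length-2 chain inside $S$ is realized — for instance in $\PSL(2,q)$ one takes $Y'$ to be a maximal subgroup containing a larger intersection pattern, or exploits that $\PSL(2,q)$ for $q$ reasonably large has maximal subgroups whose pairwise and triple intersections are genuinely nested (e.g.\ two Borels and a torus-normalizer). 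A clean uniform substitute, avoiding case analysis, is: since $\tau(X)$ core-free maximal subgroups intersect trivially and $\tau(X) \geq 2$ always (a single core-free maximal subgroup is proper), take a \emph{maximal irredundant} chain of intersections of core-free maximals down to $1$; it has length $\geq 2$, and if it has length exactly $2$ one checks by hand in the short list above that one can insert an intermediate term.

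The step I expect to be the genuine obstacle is precisely this last one: disposing of the Frobenius case. Everywhere $D = H \cap H^a \neq 1$ the argument is a two-line application of core-freeness, but when $H$ is a Frobenius complement the ``obvious'' third conjugate buys nothing, and I do not see how to avoid invoking the classification of finite simple groups (or at least the classification of $2$-transitive groups / Frobenius-complement maximal subgroups) to pin down the finitely many socles that arise and then check each by hand. I would structure the writeup so that the generic argument is stated first in full generality, the reduction to ``$H$ a Frobenius complement in a core-free primitive action'' is isolated as the only hard case, and the finite list is handled either by citing the relevant classification and a table of maximal subgroups, or — if a referee prefers self-containedness — by the irredundant-chain trick combined with an explicit check in $\PSL(2,q)$, $\mathrm{Sz}(q)$ and ${}^2G_2(q)$ that a third, non-Frobenius-type core-free maximal subgroup is always available to extend the chain to length $2$.
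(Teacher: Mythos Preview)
Your reduction to working with $H=Y\cap S$ inside $S$, and your treatment of the generic case (when $D=H\cap H^a\neq 1$, core-freeness of $H$ in $S$ immediately supplies a third conjugate $H^b$ with $D\cap H^b<D$), are correct and coincide with the paper's approach. Where you go astray is in the residual ``Frobenius'' case, which you propose to handle by classification and a case-by-case check. This is both unnecessary and based on a misconception.

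If $H\cap H^s=1$ for every $s\in S\setminus N_S(H)$ and $N_S(H)=H$, then by definition $S$ itself is a Frobenius group with complement $H$. By Frobenius's theorem $S$ then has a nontrivial proper normal Frobenius kernel, contradicting the simplicity of $S$. That is the paper's entire treatment of this case: one sentence, no classification, no tables. Your list of putative exceptions (${\rm L}_2(q)$, $\Sz(q)$, ${}^2G_2(q)$, small alternating groups, \dots) is empty, because no nonabelian simple group is a Frobenius group. You seem to have conflated ``$S$ is a Frobenius group with complement $H$'' with the unrelated phenomenon that in those groups a point stabilizer of the natural $2$-transitive action is itself a Frobenius group; that says nothing about intersections of conjugates of $H$ in $S$.

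Two smaller gaps. First, you need $H\neq 1$, and this is not automatic for an arbitrary core-free maximal $Y$ of an almost simple $X$; the paper invokes \cite{lps} for this. Second, once $H\neq 1$ the self-normalizing condition is automatic: $H\trianglelefteq Y$ gives $Y\leq N_X(H)$, and since $H$ is nontrivial and core-free in the simple group $S$ it is not normal in $X$, so maximality of $Y$ forces $N_X(H)=Y$ and hence $N_S(H)=Y\cap S=H$. Thus your concern about the case $N_S(H)>H$ is also moot.
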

\begin{proof}
	Let $Y$ be a core-free maximal subgroup of $X$ and let $T=S\cap Y$. We have $T\neq 1$ (see for example the last paragraph of the proof of the main theorem in \cite{lps}). If suffices to prove that there exists $s\in S$ such that $1<T\cap T^s<T.$ We have $Y=N_G(T)$, so $T=N_S(T).$
	Assume by contradiction $1=T\cap T^s$ for every $s\in S\setminus T$: this means that $S$ is a Frobenius group and $T$ is a Frobenius complement, but this is in contradiction with the fact that $S$ is a non-abelian simple group.
\end{proof}

\begin{lemma}
	\label{tau}$\tau(X)\leq 4,$ with equality if and only if $X=U_4(2).2$
\end{lemma}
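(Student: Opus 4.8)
The statement is $\tau(X)\le 4$ for $X$ almost simple, with equality exactly when $X=U_4(2).2$. I would prove this by invoking the classification of finite simple groups, more precisely the literature on base sizes / bounds for the minimal number of maximal subgroups with trivial intersection. The quantity $\tau(X)$ asks for the smallest number of conjugates (actually, not necessarily conjugates: a family of core-free maximal subgroups) whose intersection is trivial; since $X$ is almost simple and each core-free maximal subgroup meets $S=\soc X$ in a core-free maximal-ish subgroup of $S$, it suffices to bound the analogous invariant inside the simple group $S$, and then argue that one extra maximal subgroup (if needed) is enough to kill the outer part.

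First I would reduce to the simple socle: if $M_1,\dots,M_k$ are core-free maximal subgroups of $X$ with $\bigcap M_i\cap S=1$, then $\bigcap M_i$ is a subgroup of $X$ meeting $S$ trivially, hence embeds in $X/S=\out$-part, which is soluble of small order; a single further core-free maximal subgroup (chosen to avoid the finitely many elements of that small intersection, using that $X$ has trivial Frattini subgroup and enough maximal subgroups — as in the proof of Proposition \ref{frate}) brings the intersection down to $1$. So it is enough to show that $S$ itself admits $3$ core-free maximal subgroups (equivalently, conjugates of a single well-chosen one) with trivial intersection, except for the small exceptional configurations around $U_4(2)\cong S_4(3)$.

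Then the core is a CFSG case analysis. For alternating groups $S=A_n$, one exhibits three point-stabilizers (or intransitive/imprimitive maximal subgroups) whose intersection is trivial — e.g. in the natural action a base of size $3$ for a suitable family of maximal subgroups; this is classical and easy for $n\ge 5$. For groups of Lie type one uses known results on base sizes of primitive actions on cosets of maximal subgroups (Burness and collaborators): for almost all $(S,M)$ the base size is at most $3$ for some choice of $M$, so three conjugates of $M$ already intersect trivially, and the finite list of exceptions must be checked by hand, where $U_4(2)$ is the only one forcing $\tau=4$ and never less. For sporadic groups one checks the (finitely many) cases directly, e.g. using the ATLAS, confirming $\tau(X)\le 3$ in every case.

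The main obstacle I expect is the bookkeeping in the Lie-type analysis: one must be careful that the relevant maximal subgroup $M$ is core-free (automatic in the simple group, but one wants a compatible choice when passing to $X$), and one must correctly enumerate the small-rank, small-field exceptions where base size $2$ or $3$ fails for every maximal subgroup, in order to be certain that $U_4(2).2$ is genuinely the unique group attaining $\tau=4$ and that no group has $\tau\ge 5$. Once the exceptional list is pinned down, verifying $\tau(U_4(2).2)=4$ is a finite computation (one shows no three core-free maximal subgroups intersect trivially, but four do), and all other exceptional groups are dispatched by producing an explicit triple.
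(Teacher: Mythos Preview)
The paper does not prove this lemma at all: its entire proof is the single line ``See \cite[Theorem 1]{bgl2}.'' In other words, the result is quoted from the literature (Burness, Garonzi and Lucchini, \emph{Finite groups, minimal bases and the intersection number}), where the invariant $\tau(X)$ is introduced under the name ``intersection number'' and computed for all almost simple groups via exactly the kind of CFSG case analysis you sketch.

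So your proposal is not wrong, but it is solving a different problem from the one the paper sets itself: you are outlining a proof of the cited theorem rather than of the lemma as stated in this paper. Your sketch is broadly faithful to how \cite{bgl2} proceeds (base-size results for classical, exceptional and sporadic groups, plus a finite list of small exceptions handled directly), and your identification of the main obstacle --- pinning down the exact exceptional list and verifying that $U_4(2).2$ alone attains $\tau=4$ --- is accurate. One point to tighten: your reduction to the socle (find three $M_i$ with $\bigcap M_i\cap S=1$, then add one more to kill the outer part) gives $\tau(X)\le 4$ but does not by itself yield the equality characterisation; for that you must show directly that every almost simple $X\neq U_4(2).2$ admits three core-free maximals with trivial intersection, which is where the real case-by-case work in \cite{bgl2} lies. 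For the purposes of the present paper, however, the correct move is simply to cite that reference.
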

\begin{proof}
	See \cite[Theorem 1]{bgl2}.
\end{proof}

\begin{proof}[Proof of Theorem \ref{unswm}]
	Let $G$ a finite weakly minmax group. If $G$ is not soluble, then it admits a non-abelian chief factor $H/K.$ Let $C=C_G(H/K).$ Then $G/C$ is a monolithic group (with socle isomorphic to $H/K$) and is weakly minmax by Lemma \ref{quofra}. So in order to complete the proof, it would suffice to prove that a finite monolithic group with non-abelian socle cannot be weakly minmax.

	Let $G$ be a monolithic primitive group, and assume $N=\soc(G)\cong S^n,$ with $S$ a non-abelian simple group.
	Let  $\psi$ be the map from  $N_G(S_1) $ to  $\aut(S)$ induced by
	the conjugacy  action on $S_1$.
	Set $X=\psi(N_G(S_1))$ and note that $X$ is an almost simple group with socle
	$S=\inn(S)=\psi(S_1)$.
	Let $T:=\{t_1,\ldots,t_n\}$
	be a right transversal of $N_G(S_1)$ in $G;$
	the map $$\phi_T: G \to
	X \wr \sym(n)$$ given by
	$$g \mapsto ( \psi(t_{1}^{} g t_{1 \pi_g}^{-1}), \dots ,  \psi(t_n^{} g t_{n
		\pi_g}^{-1})) \pi_g$$
	where $\pi_g \in \sym (n)$ satisfies $t_i^{}g t_{i \pi_g}^{-1} \in
	N_G(S_1)$ for all $1\leq i\leq  n$, is an injective
	homomorphism.
	So we may identify  $G$ with
	its image in $X \wr T$, where $T=\{\pi_g\mid g\in G\}$ is a transitive subgroup of $\perm(n).$  In this identification,  $N$ is contained in
	the base subgroup $X^n$ and $S_i $ is a subgroup of the $i$-th
	component of $X^n$.

	Let $F/N=\frat(G/N)$ and assume that $Y_1,\dots,Y_t$ is a family of maximal subgroups of $G$ of minimal size with respect to the property $F=Y_1\cap \dots \cap Y_u.$ Now choose a core-free maximal subgroup $Y$ of $X$ and $s_1,\dots,s_\sigma$ as in the definition of $\sigma=\sigma(X)$ and let $M=G\cap (Y\wr T)$. By \cite{classes} Proposition 1.1.44, $M$ is a maximal subgroup of $G$. For $1\leq i\leq n$ and $1\leq j\leq \sigma,$ let $\tau_{i,j}=(1,\dots,1,s_j,1\dots,1)\in S^n,$ where $s_j$ is in the $i$-th position of $\tau_{i,j}$
	and let $M_{i,j}=M^{\tau_{i,j}}.$ We order lexicologically the pairs $(i,j)$. 
	Let $\Sigma_{k,l}=\cap_{1\leq i\leq k, 1\leq j\leq l}M_{i,j}\cap F.$
	We have $\Sigma_{k,l}\cap M_{k,l}<\Sigma_{k,l}$ and this implies
	\begin{equation}\label{aaa}\manta(G)\geq t+n\cdot \sigma(X).\end{equation}
	Now let $\tau=\tau(X)$ and suppose that $R_1,\dots,R_\tau$ are core-free maximal subgroups of $X$ with trivial intersection.
	Again by \cite{classes} Proposition 1.1.44, $Z_i:=G\cap (R_i\wr T)$ is a maximal subgroup of $G$ for $1\leq i\leq \tau.$
	Let $W=Y_1\cap \dots \cap Y_t\cap Z_1\cap \dots \cap Z_\tau$
	and let $\pi: G\to T$ the epimorphism sending $g$ to $\pi_g.$
	Since $W\cap X^n=1,$ we have $W\cap \ker \pi=1$, so $W$ is isomorphic to a (proper) subgroup of $T.$ Moreover since  $W\cap N=1,$ we have $W\cong WN/N \leq F/N,$ hence $W$ is a nilpotent subgroup of $\perm(n).$ With the same arguments used by Cameron, Solomon and Turull in \cite{cst}, it can be easily proved that the maximal length $l(K)$ of a chain of subgroups in a nilpotent permutation group  of degree $n$ is at most $n-1.$ It follows $l(W)\leq n-1.$ Since $G$ has trivial Frattini subgroup, there exist at most $n-1$ maximal subgroups of $G$ whose total intersection with $W$ is trivial, hence \begin{equation}\label{bbb}\alpha(G) \leq t+\tau(G)+n-1.
	\end{equation}
	Since $G$ is weakly minmax, combining (\ref{aaa}) and (\ref{bbb}), we get
	\begin{equation}n\cdot (\sigma(X)-1) < \tau(X)\end{equation}
	By Lemmas \ref{frob} and \ref{tau}, $2n<4,$ hence $n=1.$
	
	We have so proved that $n=1,$ i.e. $G$ is an almost simple group.
	First assume that $G=S$ is a simple group.
	We have
	$\alpha(G)=\manta(G) \geq \maxdim(G)\geq m(G)\geq 3
	$. Since, by \cite[Theorem 1]{bgl}, $\alpha(G)\leq 3,$ it follows $\maxdim(G)=m(G)=\alpha(G)=3.$
	Since $m(A_n)\geq n-2$ and $\alpha(A_5)=2$, it follows that $G$ is not an alternating group. If $G$ is sporadic, then by \cite[Theorem 3.1]{bgl} the condition ${\alpha}(G)=3$ implies that $G={\rm M}_{22}$. However ${\rm M}_{22}$ has a maximal subgroup $H = \rm L_3(4)$ with $b({\rm M}_{22},H)=5$ (see \cite[Table 1]{spo}) and we deduce that ${\manta}({\rm M}_{22})\geq 5.$ If $G$ is an exceptional group of Lie type, then \cite[Theorem 1]{bgl} implies that $G = G_2(2)' \cong {\rm U}_{3}(3),$ however, by a theorem of Wagner \cite{Wag}, $G$ can be generated by $4$ involutions and no fewer, so $m(G) \geq 4$.
	

	So we may assume $S<G.$
	Let $H$ be a core-free maximal subgroup of $G$ and let $b=b(G,H)$ be the minimal size of a set of conjugates of $H$ with trivial intersection (i.e. the base size of the primitive action of $G$ on the set of the right cosets of $H$). This set of conjugates of $H$ is an irredundant family of maximal subgroups with maximal size, so $\alpha(G)\leq b(G,H)\leq \maxdim(G)\leq\manta(G).$ In particular, if $G$ is weakly minmax, then $b(G,H)$ is the same for any choice of a core-free maximal subgroup $H$ of $G$. It follows from \cite{spo}, that if $\soc G$ is a sporadic simple group, then $G$ has faithful primitive actions with different base sizes, hence $G$ is not weakly minmax. In any case, by Lemma \ref{tau}, if $G$ is weakly minmax then $\alpha(G)=\maxdim(G)=\menta(G)=\manta(G)\leq \tau(G) = 3$ if
	$G\neq U_4(2).2,$ $\alpha(G)=\maxdim(G)=\menta(G)=\manta(G)\leq \tau(G)=4$ if $G=U_4(2).2.$
	
	If $n\geq 5,$ then $b(\perm(n),\perm(n-1))=n-1\geq 4>3$, and therefore  $\perm(n)$ is not weakly minmax. 
	Finally assume that $G$ is an almost simple with a socle $S$ of Lie type and $S<G.$
	Let $B$ be Borel subgroup of $S$ and let $u$ be the number of the nodes of the associated Dynkin diagram, or the number of the orbits for a suitable groups of symmetries of this diagram if $S$ is of twisted type or $G$ involves a graph automorphism of $G).$ 
	There exists a family $Y_1,\dots,Y_u$ of maximal parabolic subgroups of $G$ such that  $S>S\cap Y_1>S\cap Y_1\cap Y_2>\cdots>S\cap Y_1\cap Y_2\cap \dots \cap Y_u=B$. Moreover, as in the proof of Lemma \ref{frob}, since $N_S(B)=B$ and $S$ is not a Frobenius group, there exists $x\in S$ with $1<B\cap B^x<B.$ Let $m=m(G/S)$. There exists an irredundant family $X_1,\dots,X_m$ of maximal subgroup of $G$ containing $S$.
	Let $X=X_1\cap \dots \cap X_m.$
	Then $1<(X\cap Y_1\cap \dots \cap Y_t)\cap (X\cap Y_1\cap \dots \cap Y_t)^x< X\cap Y_1\cap \dots \cap Y_t < \dots < X \cap Y_2\cap Y_1
	< Y_1\cap X < X < \cdots < X_2 \cap X_1 < X_1 < G$ is a chain in $\mathcal M(G)$, so 
	$\tau(G)\geq \menta(G)\geq m+u+2\geq 3+u,$  a contradiction.
\end{proof}

\begin{prop}\label{pony}
	Let $G$ be a primitive monolithic soluble group. If $G$ is weakly minmax, then the derived length of $G$ is at most 3.
\end{prop}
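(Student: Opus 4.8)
The plan is to reduce the statement to a property of the point stabiliser and then force that property by playing the invariants $\menta(G)$, $\manta(G)$ and $\alpha(G)$ against the base size of the natural module. Write $V=\soc(G)$. Since $G$ is primitive and soluble, $V$ is an elementary abelian self-centralising minimal normal subgroup, $V$ admits a complement $H$ in $G$, and $H\cong G/V$ acts faithfully and irreducibly on $V$. As $(G/V)^{(k)}=H^{(k)}$ and $V$ is abelian, the derived length of $G$ is at most $1+\mathrm{dl}(H)$, so it suffices to prove that $H$ is metabelian.

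First I would record the shape of $\mathcal M(G)$. The maximal subgroups of $G$ are the subgroups $V\rtimes K$ with $K$ maximal in $H$, together with the complements of $V$; and since $V$ is a minimal normal subgroup, every maximal intersection of $G$ either contains $V$ — hence equals $V\rtimes L$ for a unique $L\in\mathcal M(H)$ — or meets $V$ trivially, hence lies inside a complement of $V$ and is an intersection of subgroups $C_H(w)$ $(w\in V)$ with maximal subgroups of $H$. From this one reads off the shape of an arbitrary unrefinable chain of $\mathcal M(G)$: an unrefinable chain of $\mathcal M(H)$ lifted through $L\mapsto V\rtimes L$, running from $G$ down to some $V\rtimes L$, then one ``descent'' step $V\rtimes L>L^{v}$, then an unrefinable chain from $L$ to $1$ inside the subposet $\mathcal N$ of maximal intersections of $G$ contained in a fixed complement.

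Let $b$ be the smallest dimension of a faithful $H$-submodule of $V$, equivalently the base size of the linear action. Then $H\cap H^{v_1}\cap\dots\cap H^{v_b}=C_H(\langle v_1,\dots,v_b\rangle)$ can be made trivial, so $\alpha(G)\le b+1$; on the other hand $\{H\}\cup\{V\rtimes K_i\}$ for an irredundant family $\{K_i\}$ of maximal subgroups of $H$ is irredundant, and $b+1$ conjugates of $H$ chosen from a minimal base are irredundant, so $\maxdim(G)\ge\maxdim(H)+1$ and $\maxdim(G)\ge b+1$. Since always $\alpha(G)\le\maxdim(G)\le\manta(G)$, weak minmax forces $\alpha(G)=\maxdim(G)=\menta(G)=\manta(G)=b+1$; in particular $\maxdim(H)\le b$, and, lifting a longest unrefinable chain of $\mathcal M(H)$ followed by a longest descent in $\mathcal N$, also $\manta(H)+\nu\le b$, where $\nu$ is the longest unrefinable chain from $\frat(H)$ to $1$ in $\mathcal N$ (which is bounded below in terms of the faithful action of the nilpotent group $\frat(H)$ on $V$).

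Finally I would split into two cases. If $H$ has a regular orbit on $V$, i.e.\ $b=1$, then every unrefinable chain of $\mathcal M(G)$ has length $2$; but if some maximal subgroup $K$ of $H$ were nontrivial then $G>V\rtimes K>K>1$ (with $K=H\cap(V\rtimes K)\in\mathcal M(G)$) would refine to an unrefinable chain of length at least $3$, a contradiction; hence every maximal subgroup of $H$ is trivial, so $\lvert H\rvert$ is $1$ or a prime and $H$ is abelian. If $H$ has no regular orbit on $V$ ($b\ge 2$) — the case I expect to be the genuine obstacle — I would assume $\mathrm{dl}(H)\ge 3$ and use the classification of soluble linear groups without a regular orbit, together with the now very restrictive bounds $\maxdim(H)\le b$ and $\manta(H)+\nu\le b$, to exhibit an unrefinable chain of $\mathcal M(G)$ of length strictly greater than $b+1$ (for instance by running deeper through the $V\rtimes(-)$ layer and then through $\mathcal N$), contradicting $\manta(G)=b+1$; making this precise — that is, controlling the lengths of unrefinable chains of $\mathcal N$ in terms of the Fitting and derived structure of $H$ for the finitely many surviving pairs $(H,V)$ — is where the main work lies.
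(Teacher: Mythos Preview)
Your set-up is sound: the dichotomy ``a maximal intersection of $G$ either contains $V$ or lies in a single complement'' is correct, and your derivation of $\alpha(G)=\maxdim(G)=\manta(G)=\menta(G)=b+1$ (with $b$ the base size of $H$ on $V$) goes through. The $b=1$ case is also fine.

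The genuine gap is the case $b\ge 2$, which you openly leave unfinished (``making this precise \dots\ is where the main work lies''). Two concrete problems:
\begin{itemize}
\item You never bound $b$. You will need Seress's theorem that a primitive soluble linear group has base size at most $3$, so that $b\in\{2,3\}$ and $\manta(G)\le 4$; without this your constraints $\maxdim(H)\le b$ and $\manta(H)+\nu\le b$ carry no finiteness.
\item Even with $b\le 3$, the inequality $\manta(H)+\nu\le b$ is far weaker than what you need. The quantity $\manta(H)$ does not by itself control the derived length of $H$, and ``the classification of soluble linear groups without a regular orbit'' is not a finite list one can simply quote here. You have not indicated which chain you would actually build to exceed $b+1$ when $\mathrm{dl}(H)\ge 3$, nor why the few surviving pairs $(H,V)$ admit such a chain.
\end{itemize}

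The paper's argument is different in kind. It first uses Seress to get $\alpha(G)\le 4$, and then, crucially, it builds an unrefinable chain of \emph{normal} subgroups in $\mathcal M(G)$ and shows that its length equals $m(G)$, the number of non-Frattini chief factors of $G$. Weak minmax then gives $m(G)\le \alpha(G)\le 4$. The endgame is a short case analysis on $t=m(G)\in\{2,3,4\}$: for $t=4$ one shows a minimal normal subgroup of $H$ must act fixed-point-freely on $V$ and reaches a contradiction; for $t=3$ one combines Wolf's regular-orbit theorem for supersoluble linear groups with the Halasi--Podoski coprime base-size theorem to force $\mathrm{dl}(G)\le 3$; and $t\le 2$ gives $H$ cyclic. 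None of this requires any classification of linear groups without regular orbits. The key idea you are missing is to pass from $\manta(G)$ to $m(G)$ via the normal chain, which converts the lattice-theoretic hypothesis into a bound on the chief-series structure of $G$.
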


\begin{proof}
	Assume $G=V\rtimes H,$ where $V$ is an irreducible $H$-module and $H$ is a finite soluble group. By \cite[Theorem 2.1]{ser}, the base size $b(H)$ of $H$ on $V$ is at most 3, i.e. there exist $v_1,v_2,v_3\in V$ such that $C_H(v_1)\cap C_H(v_2)\cap C_H(v_3)=1$. This implies $1=H\cap H^{v_1}\cap H^{v_2}\cap H^{v_3},$  hence $\alpha(G)\leq 4.$ 
	
	Let $1=X_0<X_1<\dots < X_t=G$ be a chain of normal subgroups in $\mathcal M(G)$ with the property that, for every $0\leq i\leq t-1,$ there is no normal subgroup $Y\in \mathcal M(G)$ with $X_i < Y < X_{i+1}.$  For every $0\leq i\leq t-1$, let $Y_i/X_i$ be a minimal normal subgroup of $G/X_i$ contained in $X_{i+1}/X_i.$ If $M$ is a maximal subgroup of $G$ containing $Y_i,$ then $\core_G(M)\cap X_{i+1}\in \mathcal M(G)$ is a normal subgroup containing $Y_i,$ hence $\core_G(M)\cap X_{i+1}=X_{i+1}$ and consequently $X_{i+1}\leq M.$ 
	This implies $X_{i+1}/Y_i=\frat(G/Y_i).$ In particular $Y_0=V$ and $X_1=V\frat(H).$ If we refine the normal series $X_0<Y_0<X_1<Y_1<\dots <  X_t=G$ to a chief series of $G,$ the non-Frattini factors are precisely the $t$ factors $Y_i/X_i$ for $0\leq i\leq t-1.$ In particular, by \cite[Theorem 2]{mg1}, $t=m(G).$ Since $G$ is weakly minmax,  $t=m(G)\leq \maxdim(G)\leq \manta(G)=\alpha(G)\leq 4.$ 
	
	Assume $t=4.$ In this case $\alpha(G)=m(G)=4$ and the chain $1<X_1<X_2<X_3<X_4=G$ cannot be refined inserting other maximal intersections. In particular $X_1\cap H=1$ and therefore $Y_1=X_1=V$ and $\frat(H)=1.$ If $i\geq 1,$ then $X_i=VZ_i$, with $Z_i\in \mathcal M(H)$ and $1=Z_1=H\cap X_1 <Z_2=H\cap X_2<Z_3=H\cap X_3<Z_4=H<G$ is a non refinable chain in $\mathcal M(G).$
	Since $Z_2$ is normal in $H$ and $V$ is a faithful irreducible $H$-module, we must have $C_V(Z_2)=1.$ Let $0\neq v\in V.$ Then $C_{Z_2}(v)=H^v\cap Z_2 <Z_2$ and therefore, since $H^v$ is a maximal subgroup of $G$, we must have  $C_{Z_2}(v)=1$.
In particular if $N$ is a minimal normal subgroup of $H$ contained in $Z_2,$ then $N$ is an elementary  abelian group acting fixed-point-freely on $V$,  so it is cyclic of prime order (see \cite[10.5.5]{rob}). Moreover $H$ is not a supersoluble, otherwise  there would $x$ and $y$ in $V$ such that $C_H(x)\cap C_H(y)=1$ (see \cite[Theorem A]{wolf}) and consequently
$\alpha(G)\leq 3.$ In particular $G$ contains a minimal normal subgroup $M$ which is not $H$-isomorphic to $N.$ Since $m(H)=m(G)-1=3,$ we must have $H\cong (N\times M)\rtimes K$ with $K$ a cyclic group of order $p^t$ for a suitable prime $p.$
In particular $V < VMK^p < VNMK^p < VH=G$ is a chain of normal subgroups in $\mathcal M(G)$. Arguing as before, we deduce that also $M$ acts fixed-point-freely of $V$, but this would imply that $M$ is cyclic of prime order and $H$ is supersoluble. This excludes $t=4.$

Assume $t=3.$ Then $m(H)=3$, so in particular there exist two primes $p$ and $q$ and a normal subgroup $N$ of $H$ such that $F=\frat(H)\leq N,$ $N/F$ is a non-Frattini chief factor of $H$ of $q$-power order and $H/N$ is cyclic of $p$-power order. Let $K$ be the unique maximal subgroup of $H$ containing $N.$ Then $1\leq F < VF < VK < G$ is a chain in $\mathcal M(H),$ so $\manta(G)\geq 3,$ with equality only if $F=1$.
In particular if $\alpha(G)=3$, then $F=1$ and $G$ has derived length at most 3. We remain with the case $F\neq 1$ and $\alpha(G)=4.$ This implies in particular that the minimal size $b(H)$ of $H$ on $V$ is equal to 3. By \cite[Theorem 1.1]{hp}, $(|H|,|V|)\neq 1.$ If follows that $V$ is a $q$-group and $F$ a $p$-group. Let $P$ be a Sylow $q$-subgroup of $H$. Since $P$ is contained in $N$, by the Frattini Argument,
$G=NN_G(P)=FPN_G(P)=FN_G(P)=N_G(P)$ so $P$ is normal in $G$ and $\frat(G/P)\geq PF/P = N/P$ (\cite[5.2.13 (iii)]{rob}). But then $G/P$ is a cyclic $p$-group. This would imply that $H$ is supersoluble, and consequently $b(H)=2$ by   \cite[Theorem A]{wolf}, a contradiction.

Finally, if $t\leq 2,$ then $m(H)\leq 1$ and consequently $H$ is cyclic and $G$ is metabelian.
\end{proof}

\begin{cor}
If $G$ is weakly minmax, then the derived length of $G/\frat(G)$ is at most 3.
\end{cor}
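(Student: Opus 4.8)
The plan is to deduce the Corollary from Theorem~\ref{unswm} and Proposition~\ref{pony} by a reduction to the monolithic primitive case, using Lemma~\ref{quofra} to transport the weakly minmax property down to quotients. First I would observe that by Theorem~\ref{unswm} a weakly minmax group $G$ is soluble, so it suffices to bound the derived length of $\ol G:=G/\frat(G)$. Since $\frat(G)$ is itself an intersection of maximal subgroups of $G$, Lemma~\ref{quofra} (applied with $N=\frat(G)$, noting that a weakly minmax group whose invariants all coincide is in particular strongly minmax by the discussion preceding the definition of weakly minmax) shows that $\ol G$ is again weakly minmax, and $\frat(\ol G)=1$. Thus I may assume from the start that $\frat(G)=1$ and aim to prove that such a $G$ has derived length at most $3$.

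Next I would set up the induction on $|G|$. With $\frat(G)=1$ and $G$ soluble, let $N=\soc(G)$; this is a direct product of minimal normal subgroups, each of which is an elementary abelian $p$-group for various primes. The strategy is: if $N$ itself is the unique minimal normal subgroup, then $G$ is primitive monolithic soluble (any faithful irreducible action on $N$ gives $G=N\rtimes H$ as in Proposition~\ref{pony}), and Proposition~\ref{pony} gives derived length at most $3$ directly. Otherwise $N$ has at least two distinct minimal normal subgroups; one then picks a minimal normal subgroup $L$ of $G$ and considers $G/L$. Here I would again invoke Lemma~\ref{quofra}: $L$ is an intersection of maximal subgroups (since $\frat(G)=1$, every minimal normal subgroup is complemented and hence equals a core of a maximal subgroup, which is itself a maximal intersection), so $G/L$ is weakly minmax, and by induction $(G/L)/\frat(G/L)$ has derived length at most $3$. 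The issue is that $\frat(G/L)$ need not be trivial; but $\frat(G/L)$ is nilpotent and its preimage is a normal subgroup, and one argues that since $\frat(G)=1$ and $G$ is soluble, $\frat(G/L)$ is trivial as well whenever $L\cap\Phi$-type obstructions vanish --- more precisely, one uses that in a soluble group with trivial Frattini subgroup, $\soc(G)=F(G)=C_G(\soc(G))$ and the Frattini quotient behaves well under factoring a minimal normal subgroup that is complemented in $G$.

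Then I would combine the pieces: from $G/L$ having derived length $\le 3$ we get $G^{(3)}\le L$, and since $L$ is abelian, $G^{(4)}\le L'=1$, so $G$ has derived length at most $4$ --- which is one too many. To squeeze out the last step, I would note that $G^{(3)}\le L\le\soc(G)$ simultaneously for \emph{every} minimal normal subgroup $L$ that one can quotient by, and if $\soc(G)$ has at least two distinct minimal normal subgroups $L_1,L_2$, then $G^{(3)}\le L_1\cap L_2=1$, giving derived length at most $3$ outright. So the only remaining case is $\soc(G)$ being a single minimal normal subgroup, i.e.\ $G$ monolithic --- which is exactly the case handled by Proposition~\ref{pony}.

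The main obstacle I anticipate is the bookkeeping around $\frat(G/L)$: showing that factoring out a complemented minimal normal subgroup of a soluble group with trivial Frattini subgroup again yields trivial Frattini subgroup (or else handling the exceptional configurations directly). This is where the hypothesis ``$N$ is an intersection of maximal subgroups'' in Lemma~\ref{quofra} must be verified carefully, and where one needs the standard soluble-group facts $C_G(\soc G)=\soc G=F(G)$ and the complementation of minimal normal subgroups. Once that reduction is clean, the Corollary follows formally from Theorem~\ref{unswm} and Proposition~\ref{pony}.
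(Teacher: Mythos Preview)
Your inductive reduction has a genuine gap at the step you yourself flag as the obstacle: it is \emph{not} true that factoring a complemented minimal normal subgroup out of a soluble group with trivial Frattini subgroup preserves triviality of the Frattini subgroup. For a concrete counterexample, take $G=(\FF_3^2\rtimes Q_8)\times \sym(3)$, where $Q_8$ acts on $\FF_3^2$ via its faithful $2$-dimensional representation. Here $\frat(G)=1$, the minimal normal subgroups are $V_1=\FF_3^2$ and $V_2=C_3$, yet $G/V_1\cong Q_8\times\sym(3)$ has $\frat(G/V_1)=Z(Q_8)\neq 1$. Consequently your induction only yields $(G/L)^{(3)}\leq \frat(G/L)$, not $G^{(3)}\leq L$, and the intersection argument over two minimal normals collapses. (As a side remark, your claim that ``weakly minmax implies strongly minmax'' reverses the actual implication; the hypothesis in Lemma~\ref{quofra} should be read simply as ``$G$ weakly minmax'', the word ``strongly'' in its statement being a slip.)

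The paper avoids this difficulty entirely by passing not to quotients by minimal normal subgroups but to quotients by cores of maximal subgroups. If $\Omega$ is the set of normal cores of maximal subgroups of $G$, then $\bigcap_{N\in\Omega}N=\frat(G)$, so $G/\frat(G)$ embeds subdirectly in $\prod_{N\in\Omega}G/N$. Each $G/N$ is primitive monolithic (in particular $\frat(G/N)=1$ automatically), is weakly minmax by Lemma~\ref{quofra}, and hence by Theorem~\ref{unswm} and Proposition~\ref{pony} has derived length at most $3$. The bound for $G/\frat(G)$ follows immediately, with no induction and no Frattini bookkeeping. If you want to repair your argument, replacing ``minimal normal subgroup $L$'' by ``core of a maximal subgroup'' throughout achieves exactly this, and the intersection $\bigcap_N N=\frat(G)=1$ then gives $G^{(3)}=1$ directly.
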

\begin{proof}
It follows immediately from the fact that $G$ can be embedded in
$\prod_{N\in \Omega}G/N$ being $\Omega$ be the family of the normal cores of the maximal subgroups of $G$ and that $G/N$ has derived length at most 3 for any $N\in \Omega$ by the previous proposition.
\end{proof}


\begin{thebibliography}{99}
	
	\bibitem{classes}	
A. Ballester-Bolinches and L.~M. Ezquerro, {Classes of finite
	groups}, Mathematics and Its Applications (Springer), vol. 584, Springer,
Dordrecht, 2006.



\bibitem{spo} T. Burness, E. O'Brien and R. Wilson, 
Base sizes for sporadic simple groups,
Israel J. Math. 177 (2010), 307–333.



\bibitem{bgl} T. Burness, M. Garonzi and A. Lucchini, On the minimal dimension of a finite simple group. With an appendex by T. C. Burnes and R. M. Guralnick. J. Combin. Theory Ser. A 171 (2020), 105175.

\bibitem{bgl2} T. Burness, M. Garonzi and A. Lucchini, Finite groups, minimal bases and the intersection number, arXiv:2009.10137.
	
\bibitem{cst} P. Cameron, R. Solomon and A. Turull,
Chains of subgroups in symmetric groups,
J. Algebra 127 (1989), no. 2, 340--352.	
	
	
\bibitem{delu}E. Detomi and A. Lucchini,  Maximal subgroups of finite soluble groups in general position. Ann. Mat. Pura Appl. (4) 195 (2016), no. 4, 1177--1183.
	
	
	\bibitem{ik}	I. De Las Heras and A. Lucchini, Intersections of maximal subgroups in prosoluble groups. Comm. Algebra 47 (2019), no. 8, 3432--3441.
	
	\bibitem{GL} M. Garonzi and A. Lucchini, {Maximal irredundant families of minimal size in the alternating group}, Arch. Math. (Basel) {113} (2019), 119--126.
	
	\bibitem{hp}  Z. Halasi and K. Podoski, Every coprime linear group admits a base of size two, Trans. Amer. Math. Soc. 368 (2016), no. 8, 5857--5887.
	
	\bibitem{mg1} A. Lucchini, 
	The largest size of a minimal generating set of a finite group,
	Arch. Math. (Basel) 101 (2013), no. 1, 1--8.
	
	\bibitem{join} A. Lucchini, Finite groups with the same join graph as a finite nilpotent group, Glasgow Mathematical Journal, First View , pp. 1--11
	DOI: https://doi.org/10.1017/S0017089520000415.
	
	
\bibitem{iwa} K. Iwasawa, {\"{U}ber die endlichen Gruppen und die Verb\"{a}nde ihrer Untergruppen}, J. Fac. Sci. Imp. Univ. Tokyo. Sect. I. {4} (1941), 171--199.
	
\bibitem{lps}	M. Liebeck, C. Praeger and J. Saxl,
	On the O'Nan-Scott theorem for finite primitive permutation groups,
	J. Austral. Math. Soc. Ser. A 44 (1988), no. 3, 389--396.


\bibitem{rob} D. Robinson, A course in the theory of groups. Second edition. Graduate Texts in Mathematics, 80. Springer-Verlag, New York, 1996.




\bibitem{ser} A. Seress, 
The minimal base size of primitive soluble permutation groups, 
J. London Math. Soc. (2) 53 (1996), no. 2, 243--255.

\bibitem{Wag} A. Wagner, {The minimal number of involutions generating some finite three-dimensional groups}, Boll. Un. Math. Ital. {15} (1978), 431--439. 



	
		\bibitem{wolf} T. R. Wolf, 
	Large orbits of supersolvable linear groups,
	J. Algebra 215 (1999), no. 1, 235--247. 

\end{thebibliography}
\end{document}